\documentclass[12pt]{article}
	\usepackage{pslatex}
	\usepackage{fancyhdr}
	\usepackage{graphicx}
	\usepackage{geometry}
	\RequirePackage[latin1]{inputenc} \RequirePackage[T1]{fontenc}

	\def\figurename{Figure} 
	\makeatletter
	\renewcommand{\fnum@figure}[1]{\figurename~\thefigure.}
	\makeatother
	
	\def\tablename{Table} 
	\makeatletter
	\renewcommand{\fnum@table}[1]{\tablename~\thetable.}
	\makeatother
	\usepackage{color}
	[2000/03/29 v1.0m Standard LaTeX package]
	
	\usepackage{amsmath}
	\usepackage{amssymb}
	\usepackage{amsfonts}
	\usepackage{amsthm,amscd}

	\newtheorem{theorem}{Theorem}[section]

	\newtheorem{proposition}{Proposition}[section]

	\theoremstyle{remark}
	\newtheorem{remark}[theorem]{Remark}
	
	\numberwithin{equation}{section}

	\def\D{\mathbb D}
	\def\P{\mathbb P}
	\def\R{\mathbb R}
	\def\E{\mathbb E}

	\def\Q{\mathbb Q}

	\def\E{\mathbb E}

\begin{document}
		
\title{New approach to optimal control of delayed stochastic Volterra integral equations}
		
\author{Roméo Kouassi Konan \thanks{romeokouadiokonan071@gmail.com}\;\; and \; Auguste Aman \thanks{aman.auguste@ufhb.edu.ci/augusteaman5@yahoo.fr, corresponding author}\\
UFR Math\'{e}matiques et Informatique, Université Félix H. Boigny, Cocody,\;\;\;\;\;\;\;\;\;\;\\}
		
\date{}
\maketitle \thispagestyle{empty} \setcounter{page}{1}
		
\thispagestyle{fancy} \fancyhead{}
\fancyfoot{}
\renewcommand{\headrulewidth}{0pt}
		
\begin{abstract}
We address the optimal control of stochastic Volterra integral equations with delay through the lens of Hida-Malliavin calculus. We show that the corresponding adjoint processes satisfy an anticipated backward stochastic Volterra integral equation (ABSVIE), and, exploiting this structure, we establish both necessary and sufficient stochastic maximum principles. Our results provide a comprehensive and rigorous framework for characterizing optimal controls in delayed stochastic systems.
\end{abstract}
	
\vspace{.08in} \noindent \textbf{MSC}:Primary: 60F05, 60H15; Secondary: 60J30\\
\vspace{.08in} \noindent \textbf{Keywords}: Backward stochastic Volterra integral equations, time delayed generators, Hölder continuity condition.
			
\section{Introduction}
Optimal control of stochastic systems has been a central topic in applied mathematics, with applications spanning finance, engineering, and biology. In particular, stochastic Volterra integral equations (SVIEs) with delay arise naturally in models where the evolution of the system depends not only on its current state but also on its past history. Delays and memory effects introduce significant mathematical challenges, making the analysis and control of such systems a rich and active area of research.
Traditional approaches for controlling stochastic differential equations (SDEs) often rely on backward stochastic differential equations (BSDEs) and the classical stochastic maximum principle. However, for SVIEs with delay, the non-Markovian nature of the system prevents a straightforward application of these classical tools. Recent developments have highlighted the role of anticipated BSDEs, which naturally accommodate the forward-looking dependence on future values of the adjoint processes, as well as Hida-Malliavin calculus, which provides powerful techniques for handling anticipative stochastic integrals.
In this work, we propose a novel approach to the optimal control of stochastic Volterra integral equations with delay. By exploiting the duality between stochastic delay differential equations (SDDEs) and anticipated backward stochastic differential equations (ABSDES), we derive a rigorous framework to characterize optimal controls. We establish both necessary and sufficient conditions for optimality through stochastic maximum principles, providing theoretical tools that extend the classical results to the delayed, non-Markovian setting. Our approach opens new avenues for the analysis and computation of optimal strategies in systems with memory and delay effects.
More precisely, in this paper we consider a controlled stochastic Volterra-type integral equation with delay of the form:
\begin{eqnarray}\label{ee1}
\left\{
\begin{array}{l}
\displaystyle X^{u}(t)= x_{0}(t)+ \int_{0}^{t}b(t,s,X^{u}(s-\delta),u(s)) ds\\\\\;\;\;\;\;\;\;\;\;\;\;\;\;\;\;\;\;\;\;\;\;\;\;\;\;\; +\displaystyle\int_{0}^{t} \sigma(t,s,X^{u}(s-\delta),u(s))dB(s), \; 	t \in [0,T],\\\\
X^{u}(t) = x_{0}(t), \quad t \in [-\delta, 0],
\end{array}\right.
\end{eqnarray}
where $x_{0},\,b$ and $\sigma$ are some given function and $u$ denotes the control process, assumed to take its values in a given set $\mathcal{U}$ and $B$ is a Brownian motion defined on a filtered probability space $\left( \Omega, \mathcal{F},  \left\lbrace \mathcal{F}_{t} \right\rbrace_{t \geqslant 0}, \mathbb{P} \right)$ satisfying the usual conditions. 

For a given function $f$ and $g$, let us consider the performance functional $J$ defined by: for all $u\in \mathcal{U}$,
\begin{eqnarray*}
J(x_0,u) = \mathbb{E} \left[ \int_0^T f(t, X^{u}(t-\delta), u(t)) \, dt + g(X^{u}(T))|X^{u}(0)=x_{0}(0) \right].
\end{eqnarray*}

The main motivation of this paper is to establish an existence of a control $u^{*}\in \mathcal{U}$ called an optimal control that maximizes the performance functional $J$ such that
\begin{eqnarray}\label{CP}
	\phi(x_{0})=J(x_0,u^*)=\sup_{u\in \mathcal{U}} J(x_0,u).
\end{eqnarray}
Several variants of this problem have already been investigated in the framework of optimal control of stochastic Volterra integral equations (SVIEs), notably by Yong \cite{yong2006,yong2008ptf} and by Agram \emph{et al.} \cite{Agram2015,Agram2018}. However, to the best of our knowledge, despite these contributions, the corresponding approach has not yet been extended to delayed systems in the context of stochastic Volterra differential equations. One may also mention the works of Bernt \O ksendal, Agnès Sulem, and Tusheng Zhang \cite{Elsanousi2000}, who analyzed optimal control with delay in the setting of stochastic differential equations with memory, driven by both a Brownian motion and a Poisson random measure. Furthermore, in the contributions of Nacira Agram, Bernt \O ksendal, and Samia Yakhlef \cite{Agram2018}, the control problem is reformulated within the framework of stochastic Volterra equations.
  
Our approach in this paper differs from the aforementioned works. In particular, the presence of the terms $X(s-\delta)$ renders the problem non-Markovian, thereby precluding the use of a finite-dimensional dynamic programming approach. Nevertheless, we demonstrate that it is still possible to derive a Pontryagin-Bismut-Bensoussan type stochastic maximum principle.

The paper is organized as follows. Section 2 introduces the formulation of the control problem and presents some preliminary results. In Section 3, we derive the Hamiltonian function together with the corresponding adjoint equation associated with our control problem. Finally, Sections 4 and 5 are devoted respectively to the establishment of a sufficient and a necessary stochastic maximum principle.

\section{Formulation of problem}
Let $\left( \Omega, \mathcal{F}, \{ \mathcal{F}_t \}_{t \ge 0}, \mathbb{P} \right)$ be a filtered probability space equipped with a Brownian motion $(B(t))_{t \in [0, T]}$ satisfying the usual conditions and $\mathcal{U}$ designed an admissible control set.
For $u\in \mathcal{U}$, we recall the controlled state $(X^{u}(t))_{t\geq 0}$, described by a stochastic Volterra integral equation with delay
\begin{eqnarray}\label{A1}
\left\{
\begin{array}{l}
\displaystyle X^{u}(t)= x_{0}(t)+ \int_{0}^{t}b(t,s,X^{u}(s-\delta),u(s)) ds\\\\\;\;\;\;\;\;\;\;\;\;\;\;\;\;\;\;\;\;\;\;\;\;\;\;\;\; +\displaystyle\int_{0}^{t} \sigma(t,s,X^{u}(s-\delta),u(s))dB(s), \; 	t \in [0,T],\\\\
X^{u}(t) = x_{0}(t), \quad t \in [-\delta, 0],
\end{array}\right.
\end{eqnarray}

and the cost functional
\begin{eqnarray}\label{cost}
	J(x_0,u) = \mathbb{E} \left[ \int_0^T f(t, X^{u}(t-\delta), u(t)) \, dt + g(X^{u}(T))|(X^{u}_s)_{s\in[-\delta,0]}=x_0 \right].
\end{eqnarray}
Throughout this work, we will use the following spaces: 
\begin{description}
\item $\bullet$ $\mathcal{S}^{2}(\R)$ is the set $\R$-valued $\{ \mathcal{F}_t \}_{t \ge 0}$-adapted continuous process $(\varphi(t))_{t\geq 0}$ such that
\begin{eqnarray*}
	\E\left(\sup_{0\leq t\leq T}|\varphi(t)|^2\right)<+\infty.
\end{eqnarray*}
\item $\bullet$  $\mathcal{M}^{2}(\R)$ is the set $\R$-valued $\{ \mathcal{F}_t \}_{t \ge 0}$-adapted continuous process $(\varphi(t))_{t\geq 0}$ such that
\begin{eqnarray*}
	\E\left(\int^{T}_{0}|\varphi(t)|^2dt\right)<+\infty.
\end{eqnarray*}
\end{description}
To conclude this section, we recall some notions and results related to the Hida-Malliavin derivative, which, according to our approach, is very important for deriving the adjoint equation and the related results. Indeed, let suppose $u^{*}$ be a optimal control and denote $u^{\varepsilon}=u^{*}+\varepsilon u$, for $u$ a given admissible control. We consider the process $Y$ defined by 
\begin{eqnarray*}
	Y(t)=\frac{dX^{u^{\varepsilon}}(t)}{d\varepsilon}|_{\varepsilon=0},
\end{eqnarray*}
where $X^{u^{\varepsilon}}$ is the solution  of \eqref{A1} with control $u^{\varepsilon}$. Unlike the case of classical stochastic differential equations (SDEs), when the state equation is driven by SVIE \eqref{A1}, it is not possible to directly isolate the quantity $Y$. To do it, like in \cite{Agram2018}, we need a the help of Hida-Malliavin derivative. Let briefly give notion some information of the notion.

Let first consider simple random variables of the form
\begin{eqnarray*}
	F=f\left(\int_0^Th_1(t)dB(t),\cdots,\int_0^T h_n(t)dB(t)\right),
\end{eqnarray*}
where $f\in C^{\infty}(\R^{n})$ and $h_i\in L^2([0,T])$ for $i=1,\cdots, n$. The Malliavin derivative $D_tF$ is then defined by:
\begin{eqnarray*}
	D_tF=\sum_{i=1}^{n}\frac{\partial f}{\partial x_i}\left(\int_0^T h_1(t)dB(t),\cdots,\int_0^T h_n(t)dB(t)\right)h_i(t),\;\; t\in [0,T].
\end{eqnarray*}
Thus, $D_tF$ is a stochastic process (in $t$), representing the sensitivity of $F$ to a small perturbation of the noise $B$ at time $t$. he operator $D$ is closable in $L^2(\Omega)$ which allows one to define a stochastic Sobolev space:
\begin{eqnarray*}
	\D^{1,2}=\{F\in L^{2}(\Omega),\;\; DF\in L^{2}([0,T]\times\Omega)\}. 
\end{eqnarray*}
One of the fundamental results is the Clark-Ocone duality formula
\begin{proposition}\label{GCO}
\textbf{(Generalized Clark--Ocone Formula~\cite{Aase2000})}\\
For all \( F \in L^2(\mathcal{F}_T, \mathbb{P}) \), we have:
\begin{eqnarray*}
F = \mathbb{E}[F] + \int_0^T \mathbb{E}[D_t F \mid \mathcal{F}_t] \, dB(t).
\end{eqnarray*}
\end{proposition}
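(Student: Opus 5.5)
The plan is in two stages: first the classical Clark--Ocone formula for $F\in\D^{1,2}$, then an extension to all of $L^2(\mathcal{F}_T,\P)$ by density, in which $D_tF$ is read as the Hida--Malliavin derivative (an element of the Hida distribution space $(\mathcal{S})^*$) and $\E[D_tF\mid\mathcal{F}_t]$ as a generalized conditional expectation.

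\textbf{Step 1: smooth functionals.} For $F\in\D^{1,2}$, I use the Wiener--It\^o chaos expansion $F=\sum_{n\ge0}I_n(f_n)$ with symmetric kernels $f_n\in L^2([0,T]^n)$. In chaos form one has $D_tF=\sum_{n\ge1}nI_{n-1}(f_n(\cdot,t))$. Conditioning on $\mathcal{F}_t$ truncates each kernel to $[0,t]^{n-1}$, so
\begin{eqnarray*}
\E[D_tF\mid\mathcal{F}_t]=\sum_{n\ge1}nI_{n-1}\big(f_n(\cdot,t)\mathbf{1}_{[0,t]^{n-1}}(\cdot)\big).
\end{eqnarray*}
On the other hand, writing $I_n(f_n)=n!\int_0^T\int_0^{t_n}\cdots\int_0^{t_2}f_n\,dB\cdots dB$ as an iterated It\^o integral gives $I_n(f_n)=\int_0^T nI_{n-1}\big(f_n(\cdot,t)\mathbf{1}_{[0,t]^{n-1}}\big)\,dB(t)$. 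Summing over $n$, with convergence in $L^2$ guaranteed by $\sum_n n\,n!\|f_n\|^2<\infty$, yields the formula, since $\E[F]=f_0$.

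\textbf{Step 2: extension to $L^2(\mathcal{F}_T,\P)$.} For a general $F\in L^2(\mathcal{F}_T,\P)$, the Hida--Malliavin derivative $D_tF=\sum_n nI_{n-1}(f_n(\cdot,t))$ still makes sense as an element of $(\mathcal{S})^*$ for a.e.\ $t$. The generalized conditional expectation $\E[D_tF\mid\mathcal{F}_t]$ is defined chaos-wise by the same truncation. The key point is that, although $D_tF$ itself may not be in $L^2$, the truncated object $\psi(t):=\sum_n nI_{n-1}(f_n(\cdot,t)\mathbf{1}_{[0,t]^{n-1}})$ is an adapted process with
\begin{eqnarray*}
\E\int_0^T|\psi(t)|^2dt=\sum_{n\ge1}n^2(n-1)!\,\frac{1}{n}\|f_n\|^2=\sum_{n\ge1}n!\|f_n\|^2<\infty .
\end{eqnarray*}
Here I use the symmetry of $f_n$: the simplex $\{t_1<\dots<t_{n-1}<t\}$ has measure fraction $1/n$ of the full symmetric integral. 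Hence $\psi\in\mathcal{M}^2(\R)$, and the It\^o integral is well defined.

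\textbf{Step 3: approximation.} Take $F_N=\sum_{n\le N}I_n(f_n)\in\D^{1,2}$. By Step 1, $F_N=\E[F]+\int_0^T\psi_N\,dB$, where $\psi_N$ is the truncation of $\psi$ to chaoses $n\le N$. The It\^o isometry together with the computation in Step 2 gives $\psi_N\to\psi$ in $L^2([0,T]\times\Omega)$, while $F_N\to F$ in $L^2(\Omega)$. Passing to the limit completes the proof.

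The main obstacle is conceptual rather than computational: making rigorous that $\E[D_tF\mid\mathcal{F}_t]$, defined in $(\mathcal{S})^*$, coincides with $\psi(t)$ and is a genuine $L^2$ process. I would handle this by \emph{defining} the generalized conditional expectation through the chaos truncation, as in \cite{Aase2000}, so that the identification holds by construction.
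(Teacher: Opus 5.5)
The paper does not prove this proposition; it only cites it from \cite{Aase2000}. Your argument is correct, and it is in substance the standard chaos-expansion proof of this result. You verify the identity chaos by chaos through the iterated It\^o integral representation of $I_n(f_n)$. You show that the truncated integrand $\psi(t)=\sum_{n\ge1}nI_{n-1}\big(f_n(\cdot,t)\mathbf{1}_{[0,t]^{n-1}}\big)$ satisfies $\E\int_0^T|\psi(t)|^2dt=\sum_{n\ge1}n!\|f_n\|^2=\E[F^2]-(\E[F])^2$. You then pass to the limit along $F_N$ using the It\^o isometry.

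Your write-up also makes explicit a point the paper leaves implicit. The paper defines $D$ only on $\D^{1,2}$, so for a general $F\in L^2(\mathcal{F}_T,\P)$ the formula makes sense only if $D_tF$ is read as a Hida--Malliavin derivative and $\E[\,\cdot\mid\mathcal{F}_t]$ as chaos-wise truncation. Your Step 2 shows that the resulting integrand is a genuine square-integrable adapted process, so the right-hand side is an ordinary It\^o integral.

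There are two small slips. First, the region carrying the factor $1/n$ is not the simplex $\{t_1<\dots<t_{n-1}<t\}$, whose share of $[0,T]^n$ is $1/n!$. It is the set $\{(t_1,\dots,t_{n-1},t)\in[0,T]^n:\max_i t_i<t\}$, whose $t$-section is $[0,t]^{n-1}$. Your arithmetic $n^2(n-1)!\cdot\frac1n=n!$ already uses this correct region. Second, $\psi$ need not be continuous in $t$, so it does not lie in $\mathcal{M}^2(\R)$ as the paper defines that space (continuity is built into the definition). What the It\^o integral requires, and what you actually have, is a progressively measurable version in $L^2([0,T]\times\Omega)$.

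Finally, the chaos-wise conditional expectation is unproblematic here precisely because, for a.e.\ $t$, the kernels $f_n(\cdot,t)$ are genuine $L^2$ functions. For general elements of $(\mathcal{S})^*$, whose kernels are distributions, multiplication by $\mathbf{1}_{[0,t]^{n-1}}$ is not automatically defined.
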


Moreover, the following generalized duality formula holds for Brownian motion.
\begin{proposition}
	\textbf{(Generalized Duality Formula for the Brownian motion \( B \))}\\
	Fix \( s \in [0, T] \).  
	If \( t \mapsto \varphi(t,s,\omega) \in L^2(\varepsilon \times \mathbb{P}) \) is an \( \mathcal{F} \)-adapted process with  
\begin{eqnarray*}
		\E \left[ \int_t^T \varphi^2(t,s) \, dt \right] < \infty ,
\end{eqnarray*}
and $ F \in L^2(\mathcal{F}_T, \mathbb{P})$. Then we have
	\begin{eqnarray}\label{dd}
		\mathbb{E} \left[ F \int_0^T \varphi(t,s) \, dB(t) \right] 
		= \mathbb{E} \left[ \int_0^T \mathbb{E}[D_t F \mid \mathcal{F}_t] \, \varphi(t,s) \, dt \right].
	\end{eqnarray}
\end{proposition}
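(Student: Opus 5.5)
We derive the generalized duality formula \eqref{dd} directly from the generalized Clark--Ocone formula of Proposition~\ref{GCO}, combined with the It\^o isometry.

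\textbf{Step 1: representation of $F$.} Since $F\in L^2(\mathcal{F}_T,\mathbb{P})$, Proposition~\ref{GCO} gives
\begin{eqnarray*}
F=\mathbb{E}[F]+\int_0^T \psi(t)\,dB(t),\qquad \psi(t):=\mathbb{E}[D_tF\mid\mathcal{F}_t].
\end{eqnarray*}
The integrand $\psi$ is $\mathcal{F}_t$-adapted and satisfies $\mathbb{E}\int_0^T\psi^2(t)\,dt=\mathrm{Var}(F)<\infty$. For $F\in\mathbb{D}^{1,2}$ this is the classical Clark--Ocone formula. For general $F\in L^2$ one interprets $D_tF$ in the Hida--Malliavin sense, so that $\psi$ coincides with the It\^o representation integrand of $F$.

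\textbf{Step 2: multiply and take expectations.} With $s$ fixed, the process $t\mapsto\varphi(t,s)$ is adapted and square integrable on $[0,T]$. Multiplying the representation of Step~1 by $\int_0^T\varphi(t,s)\,dB(t)$ and taking expectations gives
\begin{eqnarray*}
\mathbb{E}\Big[F\int_0^T\varphi(t,s)\,dB(t)\Big]
=\mathbb{E}[F]\,\mathbb{E}\Big[\int_0^T\varphi(t,s)\,dB(t)\Big]
+\mathbb{E}\Big[\int_0^T\psi(t)\,dB(t)\int_0^T\varphi(t,s)\,dB(t)\Big].
\end{eqnarray*}
The first term on the right vanishes, because an It\^o integral of a square-integrable adapted integrand has zero mean. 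For the second term we apply the polarized It\^o isometry,
\begin{eqnarray*}
\mathbb{E}\Big[\int_0^T\psi\,dB\int_0^T\varphi\,dB\Big]=\mathbb{E}\Big[\int_0^T\psi(t)\varphi(t,s)\,dt\Big],
\end{eqnarray*}
which is exactly the right-hand side of \eqref{dd}.

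\textbf{Main obstacle.} The only delicate point is Step~1: the Clark--Ocone formula must be justified for $F$ that is merely in $L^2$ rather than in $\mathbb{D}^{1,2}$. Here we rely on Proposition~\ref{GCO} as stated, with the Hida--Malliavin derivative taking values in the Hida distribution space, following \cite{Aase2000}.

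The integrability condition on $\varphi$ should be read as $\mathbb{E}\int_0^T\varphi^2(t,s)\,dt<\infty$. This is what makes $\int_0^T\varphi(t,s)\,dB(t)$ a well-defined It\^o integral in $L^2$. By Cauchy--Schwarz, it also guarantees that all the expectations in Step~2 are finite.
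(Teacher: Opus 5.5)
Your proposal is correct and follows the paper's own argument: expand $F$ via the generalized Clark--Ocone formula (Proposition~\ref{GCO}) and apply the polarized It\^o isometry, with the cross term $\mathbb{E}[F]\,\mathbb{E}\big[\int_0^T\varphi(t,s)\,dB(t)\big]$ vanishing because It\^o integrals have zero mean. You are in fact more careful than the paper, which writes the integrals over $[t,T]$ with the arguments of $\varphi$ swapped and never says why the $\mathbb{E}[F]$ term drops out; your reading of the integrability hypothesis as $\mathbb{E}\int_0^T\varphi^2(t,s)\,dt<\infty$ is the right repair of the statement.
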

\begin{proof}
As observed by Agram and Øksendal~\cite{Agram2015}, for a fixed $s \in [0, T]$, using Proposition \ref{GCO} together with Itô's isometry, we obtain
\begin{eqnarray*}
\mathbb{E} \left[ \left( \int_t^T \varphi(s,t) \, dB(t) \right) F \right] 
		&=& \mathbb{E} \left[ \left( \mathbb{E}[F] + \int_t^T \mathbb{E}[D_t F \mid \mathcal{F}_t] \, dB(t) \right) \left( \int_t^T \varphi(s,t) \, dB(t) \right) \right] \\
		&=& \mathbb{E} \left[ \int_t^T \mathbb{E}[D_t F \mid \mathcal{F}_t] \, \varphi(s,t) \, dt \right].
\end{eqnarray*}
\end{proof}
\begin{theorem}\textbf{(Representation Theorem for advanced BSVIEs)}\label{TRep}\\
	Assume that the generator $\phi: [0, T]^2\times\mathbb{R}\times\mathbb{R}\to \mathbb{R}$ such that $\phi(.,s.,.)$ is $(\mathcal{F}_s)_{s\geq 0}$-adapted. Suppose that there is the couple of process $(p(t), q(t, .))$ solution of  the backward stochastic Volterra integral equation (BSVIE) 
\begin{eqnarray*}
	p(t) = \Phi(t) + \int_t^T \phi(t, s, p(s+\delta), q(t, s))ds 
		- \int_t^T q(t, s)dB(s), \quad t \in [0, T].	
\end{eqnarray*}  
Then, for all $0\leq t<s\leq T$, we have 
\begin{eqnarray}\label{e1}
q(t, s) = \mathbb{E}\left[ D_s p(t) \mid \mathcal{F}_s \right].
\end{eqnarray}
\end{theorem}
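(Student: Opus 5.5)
The approach follows Agram and \O ksendal \cite{Agram2015} and rests on the generalized Clark--Ocone formula, Proposition \ref{GCO}, together with uniqueness of the integrand in the It\^o representation. Fix $t\in[0,T]$ and regard $p(t)$ through its defining equation. That is, set
\begin{eqnarray*}
\xi(t):=\Phi(t)+\int_t^T \phi(t,s,p(s+\delta),q(t,s))\,ds ,
\end{eqnarray*}
which is an $\mathcal{F}_T$-measurable, square integrable random variable, so that the BSVIE reads $\xi(t)=p(t)+\int_t^T q(t,s)\,dB(s)$. We interpret $D_s p(t)$ for $s>t$ in the Hida--Malliavin sense, as the derivative of the $\mathcal{F}_T$-measurable quantity that $p(t)$ represents in the equation, namely $\xi(t)$. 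This is the convention under which (\ref{e1}) is meaningful and nontrivial.

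\textbf{Step 1 (Clark--Ocone).} Apply Proposition \ref{GCO} to $F=\xi(t)$:
\begin{eqnarray*}
\xi(t)=\E[\xi(t)]+\int_0^T \E[D_s\xi(t)\mid\mathcal{F}_s]\,dB(s).
\end{eqnarray*}
Taking $\E[\,\cdot\mid\mathcal{F}_t]$ and subtracting gives
\begin{eqnarray*}
\xi(t)=\E[\xi(t)\mid\mathcal{F}_t]+\int_t^T \E[D_s\xi(t)\mid\mathcal{F}_s]\,dB(s).
\end{eqnarray*}

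\textbf{Step 2 (identification).} Compare this with $\xi(t)=p(t)+\int_t^T q(t,s)\,dB(s)$. Conditioning on $\mathcal{F}_t$ yields $p(t)=\E[\xi(t)\mid\mathcal{F}_t]$, using that $q(t,\cdot)$ is adapted on $[t,T]$. Hence
\begin{eqnarray*}
\int_t^T\big(q(t,s)-\E[D_s\xi(t)\mid\mathcal{F}_s]\big)\,dB(s)=0 .
\end{eqnarray*}
By It\^o's isometry, the integrand vanishes $ds\times d\P$-a.e. on $(t,T]$.

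\textbf{Step 3 (conclusion).} With the above reading of $D_s p(t)$ as $D_s\xi(t)$ for $s>t$, this is exactly (\ref{e1}).

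I expect the main obstacle to be justifying the identification of $D_s p(t)$ with $D_s\xi(t)$ for $s>t$. If $p(t)$ is read literally as the $\mathcal{F}_t$-measurable random variable, its derivative vanishes for $s>t$. So the proof must make explicit that $p(t)$ in (\ref{e1}) is understood through the BSVIE relation, as in \cite{Agram2015,Agram2018}. I would state this convention at the start of the proof. I would also check, via the generalized duality formula (\ref{dd}), that it is consistent: for every bounded $\mathcal{F}_T$-measurable test variable, both sides of (\ref{e1}) pair identically against $\int_t^T\varphi(s)\,dB(s)$.

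A secondary technical point is ensuring $\xi(t)\in L^2(\mathcal{F}_T,\P)$, which follows from square integrability of $\Phi(t)$, $p$ and $q(t,\cdot)$. Note that $\xi(t)$ need not lie in $\D^{1,2}$. This is why the Hida--Malliavin (generalized) version of Proposition \ref{GCO} is used rather than the classical Clark--Ocone formula.
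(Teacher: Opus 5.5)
The paper states this theorem without proof; it is imported from the Agram--\O ksendal framework and then used in Sections~3--4, so there is no argument by the authors to compare with.

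\textbf{What your argument actually proves.} Your Steps~1--3 are the standard, correct argument. What they establish is $q(t,s)=\E[D_s\xi(t)\mid\mathcal{F}_s]$ for a.e.\ $s\in(t,T]$, where $\xi(t)=\Phi(t)+\int_t^T\phi(t,r,p(r+\delta),q(t,r))\,dr$. That is not \eqref{e1}. Calling the replacement of $D_sp(t)$ by $D_s\xi(t)$ a ``convention'' hides that this replacement carries the entire content of the claim:
\begin{itemize}
\item The random variable $p(t)$ is $\mathcal{F}_t$-measurable; you use exactly this in Step~2 to get $p(t)=\E[\xi(t)\mid\mathcal{F}_t]$. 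Hence its Hida--Malliavin derivative vanishes for $s>t$ in any framework.
\item The difference $\xi(t)-p(t)=\int_t^Tq(t,r)\,dB(r)$ contributes precisely $\E[D_s(\xi(t)-p(t))\mid\mathcal{F}_s]=q(t,s)$.
\end{itemize}
Read as written, \eqref{e1} is false. Take $\phi\equiv0$ and $\Phi(t)=B(T)$. Then $p(t)=B(t)$ and $q(t,s)=1$ on $[t,T]$, whereas $\E[D_sB(t)\mid\mathcal{F}_s]=0$ for $s>t$. You should therefore present your argument as the proof of a corrected statement, not as a proof of \eqref{e1} under a notational agreement.

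\textbf{Mismatch with how the paper uses \eqref{e1}.} Your corrected statement is not the one the paper relies on. In the computation of $I_3$ (proof of Theorem~\ref{1}) and of $r_2$ (Section~\ref{sec:sufficient}), \eqref{e1} is used as $\E[D_sp(t)\mid\mathcal{F}_s]=q(t,s)$ for $s<t$. That is, it is the derivative of $p$ at the later time, taken at the earlier time. This is what produces the terms $q(s,t+\delta)$ with $s>t+\delta$ in $\mu$ and $\mathcal{H}$, and $\hat q(s,t)$ with $s>t$ in $r_2$.

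In that regime $D_sp(t)$ is a genuine derivative. However, $q(t,s)$ with $s<t$ does not enter the stochastic integral of the BSVIE and is not determined by it. The identity holds exactly when $q(t,\cdot)$ on $[0,t]$ is \emph{defined} through $p(t)=\E[p(t)]+\int_0^tq(t,s)\,dB(s)$, which is Yong's M-solution convention. It then follows from Proposition~\ref{GCO} applied to $p(t)$, together with uniqueness of the It\^o integrand.

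A complete version of the theorem should state both parts: your identity with $\xi(t)$ for $s>t$, and the Clark--Ocone identity under this convention for $s<t$.
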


\section{Time-advanced BSVIE for adjoint equations }\label{sec:adjoint}

In this section, we focus on deriving the adjoint equation associated with the stochastic Volterra integral equation (SVIE) \eqref{A1} by applying the stochastic maximum principle.
\begin{description}
\item $({\bf A1})$ For all $x,u$, the processes $b(.,s,x,u), \sigma(.,s,x,u)$ and $f(.,s,x,u,p,q)$ are $(\mathcal{F}_{s})_{s\geq 0}$-adapted and twice continuously differentiable $\mathcal{C}^2_{b}$ with respect to $t, x$ and continuously differentiable $\mathcal{C}^1_{b}$ with respect to $u$.
\item $({\bf A2})$ The function $g$ is $\mathcal{F}_{T}$-measurable and   $\mathcal{C}^1_{b}$ class. 
\item $ ({\bf A3})$ The function $t \mapsto q(t,.)$ is of class $C^1$ and
\begin{eqnarray*}
	\E\left[
\int_0^T \int_0^T 
\left(\frac{\partial q(t,s)}{\partial t}\right)^2 ds\,dt
\right] < +\infty.
\end{eqnarray*}
\end{description}

\begin{theorem}{\textbf{(Stochastic Maximum Principle: Adjoint Equation)}}\label{1}\\
Assume $({\bf A1})$-$({\bf A3})$. Then there exists an adapted adjoint process $(p(t), q(.,t))$ satisfying the following advanced backward stochastic Volterra equation  
\begin{eqnarray}\label{p}
p(t) = \frac{\partial g}{\partial x}\big(X(T)\big)+\int_t^T\mu(s,p(s+\delta),q(s,s+\delta))ds- \int_t^T q(t,s)\, dB(s)
\end{eqnarray}
where  
\begin{eqnarray*}
\mu(t,p(t+\delta),q(t,t+\delta))&=&\frac{\partial f}{\partial x}(t+\delta,X^{u}(t),u(t+\delta)) + \frac{\partial b}{\partial x}(t+\delta,t+\delta,X^{u}(t),u(t+\delta))p(t+\delta)\\
&&\frac{\partial \sigma}{\partial x}(t+\delta,t+\delta,X^{u}(t),u(t+\delta))p(t+\delta)\\
&&+\int_{t+\delta}^{T}\left(p(s) \frac{\partial^2 b}{\partial s \partial x}(s,t+\delta,X^{u}(s),u(s))ds\right.\\
&&\left.+q(s,t+\delta) \frac{\partial^2 \sigma}{\partial s \partial x}(s,t+\delta,X^{u}(s),u(s))\right)ds
\end{eqnarray*}
\end{theorem}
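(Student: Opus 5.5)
The plan follows the Hida--Malliavin route of Agram--\O ksendal: derive the adjoint equation by computing the directional derivative of $J$ at $u^*$ and rewriting it, via duality, in the form $\E\int_0^T(\cdots)Y(t)\,dt$. The coefficient of $Y$ in that expression then defines the driver $\mu$.

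\textbf{Step 1: the variational equation.} Under (A1) we differentiate \eqref{A1} along $u^\varepsilon=u^*+\varepsilon u$ at $\varepsilon=0$. This gives, with $Y(t)=0$ on $[-\delta,0]$,
\begin{eqnarray*}
Y(t)&=&\int_0^t\Big(\tfrac{\partial b}{\partial x}(t,s)Y(s-\delta)+\tfrac{\partial b}{\partial u}(t,s)u(s)\Big)ds\\
&&+\int_0^t\Big(\tfrac{\partial \sigma}{\partial x}(t,s)Y(s-\delta)+\tfrac{\partial \sigma}{\partial u}(t,s)u(s)\Big)dB(s),
\end{eqnarray*}
where the coefficients are evaluated at $(t,s,X(s-\delta),u^*(s))$.

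Because the integrands depend on $t$, $Y$ is not an Itô process in $t$. Using the $\mathcal C^2$ regularity in $t$ from (A1), we write $\tfrac{\partial b}{\partial x}(t,s)=\tfrac{\partial b}{\partial x}(s,s)+\int_s^t\tfrac{\partial^2 b}{\partial r\partial x}(r,s)dr$, and likewise for $\sigma$. A stochastic Fubini argument then turns $Y$ into a genuine semimartingale $dY(t)$. Its drift contains the diagonal terms $\tfrac{\partial b}{\partial x}(t,t)Y(t-\delta)$ together with the memory terms $\int_0^t\tfrac{\partial^2 b}{\partial t\partial x}(t,s)Y(s-\delta)ds+\int_0^t\tfrac{\partial^2\sigma}{\partial t\partial x}(t,s)Y(s-\delta)dB(s)$.

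\textbf{Step 2: the directional derivative of $J$.} By (A2) and dominated convergence,
$$\frac{d}{d\varepsilon}J(u^\varepsilon)\Big|_{0}=\E\Big[\int_0^T\Big(\tfrac{\partial f}{\partial x}(t)Y(t-\delta)+\tfrac{\partial f}{\partial u}(t)u(t)\Big)dt+\tfrac{\partial g}{\partial x}(X(T))Y(T)\Big].$$
We expand $Y(T)$ using the representation from Step 1. Every term of the form $\E[F\int\varphi\,dB]$ is converted with the duality formula \eqref{dd}, producing $\E[D_sF\mid\mathcal F_s]$. The resulting conditional Malliavin derivatives are identified with $q(t,s)$ through Theorem \ref{TRep}, applied to a candidate BSVIE whose driver is still to be determined. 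Fubini is used to bring every term into the form $\E\int_0^T(\cdots)\,Y(s-\delta)\,ds$. Finally we shift $s-\delta\mapsto t$, which is why $p(t+\delta)$, $q(t,t+\delta)$ and $u(t+\delta)$ appear; coefficients beyond $T$ are set to zero.

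\textbf{Step 3: choosing the driver.} We collect all coefficients multiplying $Y(t)$:
\begin{itemize}
\item $\tfrac{\partial f}{\partial x}(t+\delta)$;
\item the diagonal drift and diffusion contributions paired with $p(t+\delta)$;
\item the integrated memory terms $\int_{t+\delta}^T\big(p(s)\tfrac{\partial^2 b}{\partial s\partial x}(s,t+\delta)+q(s,t+\delta)\tfrac{\partial^2\sigma}{\partial s\partial x}(s,t+\delta)\big)ds$.
\end{itemize}
Their sum defines $\mu(t,p(t+\delta),q(t,t+\delta))$, and $p$ is declared to be the solution of \eqref{p}. Existence of an adapted solution $(p,q)$ of this anticipated BSVIE is then argued by a Picard iteration or contraction on $[T-\delta,T]$, followed by backward induction over intervals of length $\delta$. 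On each such interval the advanced arguments $p(s+\delta)$ and $q(s,s+\delta)$ are already known, so each step is an ordinary BSVIE solved via martingale representation (Proposition \ref{GCO}). Assumption (A3) guarantees that $\partial_t q$ is square integrable, which is needed to justify differentiating $q(t,\cdot)$ in $t$ when the memory terms are handled.

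\textbf{Main obstacle.} The hardest step is Step 2: the rigorous exchange of stochastic integrals, Malliavin conditional expectations and Lebesgue integrals in the two-parameter Volterra setting, in particular for the term $\E[p\int_0^t\partial_t\partial_x\sigma\,Y\,dB]$, which is what produces the $q(s,t+\delta)$ contribution. I would first carry out the computation for simple (elementary) $F$ and bounded coefficients, where all the exchanges are elementary, and then pass to the limit using the $\mathcal C^2_b$ bounds together with (A3).
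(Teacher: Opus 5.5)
Your outline follows the paper's route: semimartingale form of $Y$, then the duality formula \eqref{dd} and Theorem~\ref{TRep} for the $dB$-memory terms, then Fubini, the shift $t-\delta\mapsto t$, and the choice of driver. However, Step~2 never introduces the step through which $p$ and its driver enter the variation, and this is a genuine gap. In the paper that step is It\^o's product formula for $p(T)Y(T)$. It uses $p(T)=\frac{\partial g}{\partial x}(X(T))$, $Y(0)=0$ and the semimartingale decomposition
\[
dp(t)=-\Big(h(t)+\int_t^T\frac{\partial q}{\partial t}(t,s)\,dB(s)\Big)dt+q(t,t)\,dB(t).
\]
This decomposition is where (A3) is used; the memory terms of $Y$ need only (A1). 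The product formula does three things your Step~2 cannot do as written.
\begin{itemize}
\item It places $p(t)$, rather than $\frac{\partial g}{\partial x}(X(T))$, in front of $dY(t)$, so duality yields $\mathbb{E}[D_sp(t)\mid\mathcal{F}_s]$. Applying \eqref{dd} directly to $\mathbb{E}[\frac{\partial g}{\partial x}(X(T))Y(T)]$ only gives $\mathbb{E}[D_s\frac{\partial g}{\partial x}(X(T))\mid\mathcal{F}_s]$, which is in general not the $q$ of the adjoint equation.
\item It produces $\mathbb{E}\int_0^TY(t)\,dp(t)=-\mathbb{E}\int_0^TY(t)h(t)\,dt$. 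This term is the only reason that choosing the driver cancels the $Y$-terms; merely defining $\mu$ as ``the coefficient of $Y$'' cancels nothing.
\item Its covariation term $\mathbb{E}\int_0^Tq(t,t)\frac{\partial\sigma}{\partial x}(t,t)Y(t-\delta)\,dt$ shows that the diagonal diffusion contribution pairs with $q(t+\delta,t+\delta)$, not with $p(t+\delta)$ as your Step~3 asserts. The corresponding term $\mathbb{E}\int_0^Tp(t)\frac{\partial\sigma}{\partial x}(t,t)Y(t-\delta)\,dB(t)$ vanishes. The $p(t+\delta)$ in the $\sigma$-term of the displayed $\mu$ is a misprint; the paper's own computation of $h$ has $q(t+\delta,t+\delta)$.
\end{itemize}

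Your existence argument is a genuinely different route: the paper only cites Wen--Shi's well-posedness result for anticipated BSVIEs under a Lipschitz condition. Your induction over intervals of length $\delta$ is more elementary, and even simpler than you describe. Since $\mu(s)$ involves $p,q$ only at times $\ge s+\delta$ and vanishes for $s>T-\delta$, no contraction is needed. On each interval $p(t)=\mathbb{E}[\frac{\partial g}{\partial x}(X(T))+\int_t^T\mu(s)\,ds\mid\mathcal{F}_t]$ is explicit, and $q(t,\cdot)$ comes from martingale representation.

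However, ``already known'' needs one more ingredient. The memory term of $\mu$ contains $q(r,t+\delta)$ with $r>t+\delta$, i.e.\ values of $q$ below the diagonal, which equation \eqref{p} does not determine. You must fix them by the Clark--Ocone (M-solution) convention $p(r)=\mathbb{E}[p(r)]+\int_0^rq(r,v)\,dB(v)$, that is, $q(r,v)=\mathbb{E}[D_vp(r)\mid\mathcal{F}_v]$ for $v<r$. This is also exactly what the duality step requires. That step produces $\mathbb{E}[D_sp(t)\mid\mathcal{F}_s]$ with $s<t$, whereas Theorem~\ref{TRep} is formulated for $t<s$, where $D_sp(t)=0$ for adapted $p$. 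With this convention stated explicitly, your induction yields the adjoint pair in the sense under which the cancellation of Step~3 is legitimate; the paper leaves this point implicit as well.
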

\begin{proof}
Le consider the stochastic Volterra integral equation with delay of the form
\begin{eqnarray*}
\left\{
\begin{array}{lll}
 X^{u}(t) & =& \displaystyle \int_{0}^{t}b(t,s,X^{u}(s-\delta),u(s))\,ds +\int_{0}^{t}\sigma(t,s,X^{u}(s-\delta),u(s))\,dB(s)	 
\quad t \in [0,T],\\\\	
X^{u}(t)& =& x_{0}(t), \quad t \in [-\delta, 0],\quad \delta >0,
\end{array}
\right.
\end{eqnarray*}
and recall the optimal problem
\begin{eqnarray*}
\phi(x_{0})=\sup_{u \in \mathcal{U}} J(x_0,u),
\end{eqnarray*}
where the cost functional $J$ is defined by \eqref{cost}.
For sufficiently small $\varepsilon>0$, let us define $u^{\varepsilon}(t) = u^{*}(t) + \varepsilon \beta(t)$, where $u^{*}$ and $\beta$ are respectively optimal and admissible control. Since $\varepsilon>0$ is sufficiently small, $u^{\varepsilon}$ belongs to  $\mathcal{U}$.
Then, according to assumption $({\bf A1})$ and $({\bf A2})$ we have 
\begin{eqnarray}\label{CF}
\frac{dJ(x_0,u^{\varepsilon})}{d\varepsilon}|_{\varepsilon=0}&=&\lim_{\varepsilon \rightarrow 0}\frac{J(x_0,u^{\varepsilon})-J(x_0,u)}{\varepsilon}\nonumber\\
&=&\lim_{\varepsilon\rightarrow 0}\mathbb{E}\left[\int_{0}^{T}\left(\frac{f(t,X^{u^\varepsilon}(t-\delta),u^{\varepsilon}(t))-f(t,X^{u}(t-\delta),u(t))}{\varepsilon}\right)dt\right.\nonumber\\
&&\left.+\frac{g\big(X^{u^{\varepsilon}}(T)\big)-g\big(X^{u}(T)}{\varepsilon}\Big|(X^{u}_s)_{s\in[-\delta,0]}=x_0\right]\nonumber\\
&=& \mathbb{E}\left[\int_{0}^{T}\left(\lim_{\varepsilon\rightarrow 0}\frac{f(t,X^{u^\varepsilon}(t-\delta),u^{\varepsilon}(t))-f(t,X^{u}(t-\delta),u(t))}{\varepsilon}\right)dt\right.\nonumber\\
&&\left.+\lim_{\varepsilon\rightarrow 0}\frac{g\big(X^{u^{\varepsilon}}(T)\big)-g\big(X^{u}(T)}{\varepsilon}\Big|(X^{u}_s)_{s\in[-\delta,0]}=x_0\right]\nonumber\\
&=& \mathbb{E}\left[\int_{0}^{T}\left(\frac{\partial f}{\partial x}(t,X^{u}(t-\delta),u(t))Y(t-\delta)+\frac{\partial f}{\partial u}(t,X^{u}(t-\delta),u(t))\beta(t)\right)dt\right.\nonumber\\
&&\left.+\frac{\partial g}{\partial x}(X^{u}(T))Y(T)\Big|(X^{u}_s)_{s\in[-\delta,0]}=x_0 \right],  
\end{eqnarray}
where we recall that
\begin{eqnarray*}
	Y(t)=\frac{dX^{u^{\varepsilon}}(t)}{d\varepsilon}|_{\varepsilon=0}.
\end{eqnarray*}
On the the other hand, Let us suppose that an adjoint equation of \eqref{A1} is 
the following BSDE.
\begin{equation*}
p(t) = \frac{\partial g}{\partial x}(X^{u}(T))+\int_{t}^{T}h(s)ds - \int_{t}^{T}q(t,s)dB(s), \;\;\;\; t\in[0,T],
\end{equation*}
with it differential form
\begin{eqnarray}\label{PP}
\left\{
\begin{array}{ll}
	dp(t) =& -\left(h(t)+\displaystyle\int_{t}^{T}\frac{\partial q}{\partial t}(t,s) dB(s)\right)dt + q(t,t)dB(t)	\\\\
p(T)=& \frac{\partial g}{\partial x}(X^{u}(T))	
\end{array}
	\right.
\end{eqnarray}
In the sequel of this proof, our goal is to explicitly determine the expression of $\mu$. For this instance, let us apply Itô's formula to  $p(T)Y(T)$. We have
\begin{eqnarray}\label{Ito}
p(T)Y(T)=p(0)Y(0)+\int_0^T p(t)dY(t)+\int_0^TY(t)dp(t)+\int_0^Td\langle p,Y\rangle_t.
\end{eqnarray}
Taking expectation in \eqref{Ito} together with the fact that $p(T)=\frac{\partial g}{\partial x}(X^{u}(T))$, we get
\begin{eqnarray}\label{Eq001}
\E\left[\frac{\partial g}{\partial x}(X^{u}(T))Y(T)\right]=\E \left[ p(0)Y(0)+\int_0^T p(t)dY(t)+\int_0^TY(t)dp(t)+\int_0^Td\langle p,Y\rangle_t\right].
\end{eqnarray}
In view of it definition, we have
\begin{eqnarray*}
Y(t)= \int_{0}^{t}\left(\frac{\partial b}{\partial x}(t,s)Y(s-\delta)
+ \frac{\partial b}{\partial u}(t,s)\beta(s)\right) \,ds 
+\int_{0}^{t}\left(\frac{\partial \sigma}{\partial x}(t,s)Y(s-\delta)
+ \frac{\partial \sigma}{\partial u}(t,s)\beta(s)\right) \,dB(s)
\end{eqnarray*}
and then
\begin{eqnarray*}\label{YY}
dY(t)&=& 
\left[\frac{\partial b}{\partial x}(t,t)Y(t-\delta)+ \frac{\partial b}{\partial u}(t,t)\beta(t)+ \int_{0}^{t}\frac{\partial^{2} b}{\partial t \partial x}(t,s)Y(s-\delta)\,ds + \int_{0}^{t}\frac{\partial^{2} b}{\partial t \partial u}(t,s)\beta(s)\,ds\right.\nonumber \\
 &&\left.+\int_{0}^{t}\frac{\partial^{2}\sigma}{\partial t \partial x}(t,s)Y(s-\delta)\, dB(s)+\int_{0}^{t}\frac{\partial^{2}\sigma}{\partial t \partial u}(t,s)\beta(s)\, dB(s)\right] dt\nonumber\\
 &&+ \left(\frac{\partial \sigma}{\partial x}(t,t)Y(t-\delta)
+ \frac{\partial \sigma}{\partial u}(t,t)\beta(t)\right)dB(t),	
\end{eqnarray*}
where $\phi(t,s)=\phi(t,s,X^{u}(s-\delta),u(s))$ for $\phi=b,\, \sigma$.
Thus, we get
\begin{eqnarray}\label{Eq002}
\E\left(\int_0^T p(t)dY(t)\right)&=&\E\left[\int_0^T\left(\frac{\partial b}{\partial x}(t, t)p(t) Y(t-\delta)+\frac{\partial b}{\partial u}(t, t)p(t) \beta(t)\right)dt\right]\nonumber\\ 
&&+\E\left[\int_0^T \left(\frac{\partial \sigma}{\partial x}(t, t)p(t) Y(t-\delta)+\frac{\partial \sigma}{\partial u}(t, t) p(t)\beta(t)\right)dB(t)\right]\nonumber\\
&&+\E\left[\int_0^Tp(t)\left[\int_0^T\left(\frac{\partial^2 b}{\partial t \partial x}(t, s)Y(s-\delta)+\frac{\partial^2 b}{\partial t \partial u}(t, s) \beta(s)\right){\bf 1 }_{[0,t]}(s) ds\right]dt\right]\nonumber\\
&&+ \E\left[\int_0^T p(t)\left[\int_0^T \left(\frac{\partial^2 \sigma}{\partial t \partial x}(t, s)Y(s-\delta)+\frac{\partial^2 b}{\partial t \partial u}(t, s) \beta(s)\right){\bf 1 }_{[0,t]}dB(s)\right]dt\right]\nonumber\\
&=& \E\left[\int_0^T\left(\frac{\partial b}{\partial x}(t, t)p(t) Y(t-\delta)+\frac{\partial b}{\partial u}(t, t)p(t) \beta(t)\right)dt\right]\nonumber\\ 
&&+I_1+I_2+I_3	
\end{eqnarray}
In view of $({\bf A1})$-$(({\bf A2})$, we have 
\begin{eqnarray}\label{Eq005}
I_1 &=& 0.
\end{eqnarray}
By using Fubini's theorem, we obtain
\begin{eqnarray}\label{Eq003}
 I_2 &=&\mathbb{E}\left[ \int_0^T \left( \int_0^T p(t) \left(\frac{\partial^{2}
	b}{\partial t \partial x}(t, s)Y(s-\delta)+\frac{\partial^{2}
		b}{\partial t \partial u}(t, s)\beta(s)\right){\bf 1 }_{[s,+\infty[}(t)dt\right)ds\right]. 	
\end{eqnarray}
Using again Fubini's Theorem and in virtue of the duality formula  \eqref{dd}, we obtain
\begin{eqnarray*}
I_3 &=&\int_0^T \mathbb{E}\left[ p(t)\int_0^T  \left(\frac{\partial^{2} \sigma}{\partial t \partial x}(t, s)Y(s-\delta)+\frac{\partial^{2} \sigma}{\partial t \partial u}(t, s)\beta(s)\right){\bf 1 }_{[0,t](s)}dB(s)\right] dt\\
	&=&\int_0^T \mathbb{E} \left[ \int_0^T \mathbb{E}\left[ D_s p(t) \mid \mathcal{F}_s \right]\left(\frac{\partial^{2} \sigma}{\partial t \partial x}(t, s)Y(s-\delta)+\frac{\partial^{2} \sigma}{\partial t \partial u}(t, s)\beta(s)\right){\bf 1 }_{[0,t](s)}ds \right] dt\\
	&=&\mathbb{E}\left[\int_0^T \left( \int_0^T \mathbb{E}\left[ D_s p(t) \mid \mathcal{F}_s \right]\left(\frac{\partial^{2} \sigma}{\partial t \partial x}(t, s)Y(s-\delta)+\frac{\partial^{2} \sigma}{\partial t \partial u}(t, s)\beta(s)\right){\bf 1 }_{[s,+\infty]}(t)dt \right) ds\right]\end{eqnarray*}
Next, according to \eqref{e1} in Theorem \ref{TRep}, we have
\begin{eqnarray}\label{Eq004}
	I_3=\mathbb{E}\left[\int_0^T \left( \int_0^T q(t,s)\left(\frac{\partial^{2} \sigma}{\partial t \partial x}(t, s)Y(s-\delta)+\frac{\partial^{2} \sigma}{\partial t \partial u}(t, s)\beta(s)\right){\bf 1 }_{[s,+\infty]}(t)dt \right) ds\right].
\end{eqnarray}
Plugging \eqref{Eq005}-\eqref{Eq004} in \eqref{Eq002} we obtain
\begin{eqnarray}\label{Eq006bis}
\E\left(\int_0^T p(t)dY(t)\right)&=& \E\left[\int_0^T\left(\frac{\partial b}{\partial x}(t, t)p(t) Y(t-\delta)+\frac{\partial b}{\partial u}(t, t)p(t) \beta(t)\right)dt\right.\nonumber\\
&&+\left. \int_0^T \left( \int_0^T p(t) \left(\frac{\partial^{2}
	b}{\partial t \partial x}(t, s)Y(s-\delta)+\frac{\partial^{2}
		b}{\partial t \partial u}(t, s)\beta(s)\right){\bf 1 }_{[s,+\infty[}(t)dt\right)ds\right.\nonumber\\
&&+\left.\int_0^T \left( \int_0^T q(t,s)\left(\frac{\partial^{2} \sigma}{\partial t \partial x}(t, s)Y(s-\delta)+\frac{\partial^{2} \sigma}{\partial t \partial u}(t, s)\beta(s)\right){\bf 1 }_{[s,+\infty]}(t)dt \right) ds\right].\nonumber\\
\end{eqnarray}
On the other hand, it follows from \eqref{PP}, \eqref{YY} that
\begin{eqnarray}\label{PY}
\mathbb{E} \left[ \int_0^T d\langle p, Y \rangle_t \right] = \mathbb{E}\left[ \int_{0}^{T}q(t,t)\left( \frac{\partial \sigma}{\partial x}(t,t)Y(t-\delta) + \frac{\partial \sigma}{\partial x}(t,t)\beta(t)\right)\,dt\right]
\end{eqnarray}
and
\begin{eqnarray}\label{Eq007}
\E\left[\int_{0}^{T}Y(t)dp(t)\right]&=&\E\left[\int_{0}^{T}Y(t)\left(-h(t,X(t),p(t),q(t,t))-\int_{t}^{T}\frac{\partial q}{\partial t}(t,s)dB(s)\right)dt\nonumber\right.\\
&&\left.+\int_0^T Y(t)q(t,t)dB(t)\right] \nonumber\\
&=& -\E\left[\int_{0}^{T}Y(t)h(t,X(t),p(t),q(t,t))dt\right]-\E\left[\int_{0}^{T}Y(t)\left(\int_{t}^{T}\frac{\partial q}{\partial t}(t,s)dB(s)\right)dt\right]\nonumber\\
&&+\E\left[\int_0^T Y(t)q(t,t)dB(t)\right]\nonumber\\
&=&-\E\left[\int_{0}^{T}Y(t)h(t,X(t),p(t),q(t,t))dt\right].
\end{eqnarray}
Indeed, in view of assumption $({\bf A1})$-$({\bf A3})$ and stochastic Fubini Theorem, one can derive easily that 
\begin{eqnarray*}
	\E\left[\int_{0}^{T}Y(t)\left(\int_{t}^{T}\frac{\partial q}{\partial t}(t,s)dB(s)\right)dt\right]&=&\E\left[\int_{0}^{T}\left(\int_{0}^{s}Y(t)\frac{\partial q}{\partial t}(t,s)dt\right)dB(s)\right]\\
	&=& 0
\end{eqnarray*}
and 
\begin{eqnarray*}
	\E\left[\int_0^T Y(t)q(t,t)dB(t)\right]=0.
\end{eqnarray*}
Thus, with \eqref{Eq006bis}, \eqref{PY}, \eqref{Eq007} and \eqref{Eq001} put together, we obtain
\begin{eqnarray*}
\E\left[\frac{\partial g}{\partial x}(X^{u}(T))Y(T)\right]&=& \E\left[\int_0^T\left(\frac{\partial b}{\partial x}(t, t)p(t) Y(t-\delta)+\frac{\partial b}{\partial u}(t, t)p(t) \beta(t)\right)dt\right.\nonumber\\
&&+\left.\int_{0}^{T}q(t,t)\left( \frac{\partial \sigma}{\partial x}(t,t)Y(t-\delta) + \frac{\partial \sigma}{\partial x}(t,t)\beta(t)\right)dt\right.\\
&&+\left. \int_0^T \left( \int_0^T p(t) \left(\frac{\partial^{2}
	b}{\partial t \partial x}(t, s)Y(s-\delta)+\frac{\partial^{2}
		b}{\partial t \partial u}(t, s)\beta(s)\right){\bf 1 }_{[s,+\infty[}(t)dt\right)ds\right.\nonumber\\
&&+\left.\int_0^T \left( \int_0^T q(t,s)\left(\frac{\partial^{2} \sigma}{\partial t \partial x}(t, s)Y(s-\delta)+\frac{\partial^{2} \sigma}{\partial t \partial u}(t, s)\beta(s)\right){\bf 1 }_{[s,+\infty]}(t)dt \right) ds\right.\\
&&-\left. \int_{0}^{T}Y(t)h(t,t,X(t),p(t),q(t,s))dt\right],
\end{eqnarray*} 
where $h(t)=h(t,X(t),p(t),q(t,t))$.\newpage
Finally, we have
\begin{eqnarray*}
&&\frac{dJ(x_0,u^{\varepsilon})}{d\varepsilon}|_{\varepsilon=0}\\ &=&	\mathbb{E}\left[\int_{0}^{T}\left(\frac{\partial f}{\partial x}(t,X^{u}(t-\delta),u(t))Y(t-\delta)+\frac{\partial f}{\partial u}(t,X^{u}(t-\delta),u(t))\beta(t)\right)dt\right.\\
&&+\left.\int_0^T\left(\frac{\partial b}{\partial x}(t, t)p(t) Y(t-\delta)+\frac{\partial b}{\partial u}(t, t)p(t) \beta(t)\right)dt\right.\nonumber\\
&&+\left.\int_{0}^{T}q(t,t)\left( \frac{\partial \sigma}{\partial x}(t,t)Y(t-\delta) + \frac{\partial \sigma}{\partial x}(t,t)\beta(t)\right)dt\right.\\
&&+ \left. \int_0^T \left( \int_0^T p(t) \left(\frac{\partial^{2}
	b}{\partial t \partial x}(t, s)Y(s-\delta)+\frac{\partial^{2}
		b}{\partial t \partial u}(t, s)\beta(s)\right){\bf 1 }_{[s,+\infty[}(t)dt\right)ds \right.\nonumber\\
&&+\left.\int_0^T \left( \int_0^T q(t,s)\left(\frac{\partial^{2} \sigma}{\partial t \partial x}(t, s)Y(s-\delta)+\frac{\partial^{2} \sigma}{\partial t \partial u}(t, s)\beta(s)\right){\bf 1 }_{[s,+\infty]}(t)dt \right) ds+\int_{0}^{T}Y(t)dp(t)\right]\\
&=&\mathbb{E}\left[\int_{0}^{T-\delta}\left(\frac{\partial f}{\partial x}(t+\delta,X^{u}(t),u(t+\delta))Y(t)+\frac{\partial f}{\partial u}(t+\delta,X^{u}(t),u(t+\delta))\beta(t+\delta)\right)dt\right.\\
&&\left.\int_0^{T-\delta}\left(\frac{\partial b}{\partial x}(t+\delta, t+\delta)\,p(t+\delta)\,Y(t)+\frac{\partial b}{\partial u}(t+\delta, t+\delta)\,p(t+\delta)\, \beta(t+\delta)\right)dt\right.\nonumber\\
&&+\left.\int_{0}^{T-\delta}q(t+\delta,t+\delta)\left( \frac{\partial \sigma}{\partial x}(t+\delta,t+\delta)\,Y(t) + \frac{\partial \sigma}{\partial x}(t+\delta,t+\delta)\,\beta(t+\delta)\right)dt\right.\\
&&+\left.\int_0^T \left( \int_0^T p(r) \left(\frac{\partial^{2}
	b}{\partial r \partial x}(r, s+\delta)Y(s)+\frac{\partial^{2}
		b}{\partial r \partial u}(r, s+\delta)\beta(s+\delta)\right){\bf 1 }_{[s+\delta,+\infty[}(r)dr\right)ds \right.\nonumber\\
&&+\left.\int_0^T \left( \int_0^T q(r,s+\delta)\left(\frac{\partial^{2} \sigma}{\partial r \partial x}(r, s+\delta)Y(s)+\frac{\partial^{2} \sigma}{\partial r \partial u}(r, s+\delta)\beta(s+\delta)\right){\bf 1 }_{[s+\delta,+\infty[}(r)dr \right) ds\right.\\
&&\left.-\int_{0}^{T}Y(t)h(t)dt\right].
\end{eqnarray*}
Since adjoint process in order to eliminate the terms involving $Y$ and to express the variation solely in terms of the control perturbation, we need for all $t\in [0,T-\delta]$,
\begin{eqnarray*}
\frac{\partial f}{\partial x}(t+\delta,X^{u}(t),u(t+\delta)) 
			+ \frac{\partial b}{\partial x}(t+\delta,t+\delta)p(t+\delta)
			+ \frac{\partial \sigma}{\partial x}(t+\delta,t+\delta)q(t+\delta,t+\delta)\\
			+\int_{t+\delta}^{T}\left(p(s) \frac{\partial^2 b}{\partial s \partial x}(s,t+\delta)+q(s,t+\delta) \frac{\partial^2 \sigma}{\partial s \partial x}(s,t+\delta)\right)ds-h(t)	&=& 0 
\end{eqnarray*}
so that 
\begin{eqnarray*}
h(t)&=&\left[\frac{\partial f}{\partial x}(t+\delta,X^{u}(t),u(t+\delta)) + \frac{\partial b}{\partial x}(t+\delta,t+\delta)p(t+\delta)+ \frac{\partial \sigma}{\partial x}(t+\delta,t+\delta)q(t+\delta,t+\delta)\right.\\
&&\left.+\int_{t+\delta}^{T}\left(p(s) \frac{\partial^2 b}{\partial s \partial x}(s,t+\delta)+q(s,t+\delta) \frac{\partial^2 \sigma}{\partial s \partial x}(s,t+\delta)\right)ds\right]{\bf 1}_{[0,T-\delta]}(t)\\
&=& \frac{\partial \mathcal{H}}{\partial x}(t),
\end{eqnarray*}
where the function $\mathcal{H}$ is the Hamiltonian functional associated to our control problem \eqref{CP} defined by
\begin{eqnarray*}
\mathcal{H}(t,x,u,p,q)&=&\left[f(t+\delta,x,u)+b(t+\delta,t+\delta,x,u)p(t+\delta)+\sigma(t+\delta,t+\delta,x,u)q(t+\delta,t+\delta\right.)\\
	&&\left.+\int_{t+\delta}^T\left(p(s)\frac{\partial b}{\partial s}(s,t+\delta)+q(s,t+\delta)\frac{\partial \sigma}{\partial s}(s,t+\delta)\right)ds\right]{\bf 1}_{[0,T-\delta]}.
 \end{eqnarray*}
In virtue of the above condition, we have
\begin{eqnarray*}
&&\frac{dJ(x_0,u^{\varepsilon})}{d\varepsilon}|_{\varepsilon=0}\\
&=&\mathbb{E}\left[\int_{0}^{T-\delta}\left(\frac{\partial f}{\partial u}(t+\delta,X^{u}(t),u(t+\delta))\beta(t+\delta)+\frac{\partial b}{\partial u}(t+\delta, t+\delta)\,p(t+\delta)\beta(t+\delta)\right.\right.\\
&&\left.\left.+\frac{\partial \sigma}{\partial u}(t+\delta, t+\delta)\,q(t+\delta,t+\delta)\beta(t+\delta)\right.\right.\\
&&\left.\left.+\int_{s+\delta}^T\left(\frac{\partial^{2}b}{\partial r \partial u}(r, s+\delta)\beta(s+\delta)p(r)+\frac{\partial^{2}\sigma}{\partial r \partial u}(r, s+\delta)\beta(t+\delta)q(r,t+\delta)\right)dr\right)dt\right]\\
&=& \mathbb{E}\left[\int_{0}^{T}\frac{\partial \mathcal{H}}{\partial u}(t)dt\right]
\end{eqnarray*}
To end this proof, let us observe that equation \eqref{p} is an anticipated Volterra-type BSDE. This type of BSDE was first studied by Jiaqiang Wen and Yufeng Shi \cite{wen2017a}. Among other things, they established a existence and uniqueness result under a global Lipschitz condition. Therefore, since functions $b,\sigma$ and $f$ satisfy assumptions $(A1)$-$(A3)$, the fonction $\mu$ is Lipchtz with respecct $p$ and $q$ so that BSDE \eqref{p} admit a unique solution. 
\end{proof}

\section{Sufficient Maximum Principle}\label{sec:sufficient}
In this section, we establish a stochastic maximum principle for delayed stochastic Volterra integral systems under partial information. Let $(\Omega,\mathcal{F},(\mathcal{F}_t)_{t \geq 0},\mathbb{P})$ be a filtered probability space satisfying the usual conditions. The information available to the controller is modeled by $(\mathcal{G}_t)_{t \geq 0}$ a sub-filtration of  $(\mathcal{F}_t)_{t \geq 0}$. Let $U \subset \mathbb{R}$ be a nonempty convex set. We define the set of admissible controls, denoted by $\mathcal{A}_{\mathcal{G}}$, as the collection of all $U$-valued processes $u = (u_t)_{t \geq 0}$, $(\mathcal{G}_t)_{t \geq 0}$-adapted and càdlàg. For notational simplicity, we will write $X^{u}(t) = X(t)$.
\begin{theorem}
Let $\hat{u} \in \mathcal{A}_{\mathcal{G}}$ and denote by $\hat{X}(t)$ the corresponding state process. Let $\big(\hat{p}(t), \hat{q}(t,s)\big)$ be the associated adjoint processes, which are solutions of equations \eqref{p}.
Assume that the following conditions hold:
\begin{itemize}
\item[(i)] The function $x \mapsto g(x)$ and the Hamiltonian $(x, u) \mapsto \mathcal{H}(t, x, u, \hat{p}(t), \hat{q}(.,t))$ are concave for each $t \in [0,T]$, almost surely.
\item[(ii)] Assumption $({\bf A3})$ holds for all $u \in \mathcal{A}_{\mathcal{G}}$.
\item[(iii)] (Maximum condition) For all $t \in [0,T]$, it holds that
\begin{eqnarray}\label{Ep002}
	\mathbb{E} \left[ \mathcal{H}(t, \hat{X}(t), \hat{u}(t), \hat{p}(t), \hat{q}(\cdot,t))|\mathcal{G}_t\right]
= \max_{v \in U} \mathbb{E} \left[\mathcal{H}(t, \hat{X}(t), v(t), \hat{p}(t), \hat{q}(\cdot,t)) \,\big|\mathcal{G}_t\right].
\end{eqnarray}
\end{itemize}
Then, the control $\hat{u}$ is optimal for the stochastic control problem \eqref{CP}.
\end{theorem}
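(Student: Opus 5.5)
The plan is the standard concavity argument for sufficient maximum principles, adapted to the delayed Volterra duality computed in the proof of Theorem \ref{1}. Fix an arbitrary $u\in\mathcal{A}_{\mathcal{G}}$ with state $X=X^{u}$, and write $\tilde X(t)=X(t)-\hat X(t)$ for $t\in[-\delta,T]$. Since both states start from the same initial path, $\tilde X(t)=0$ on $[-\delta,0]$. We write
\begin{eqnarray*}
J(x_0,u)-J(x_0,\hat u)=I_1+I_2,
\end{eqnarray*}
where $I_1=\E\int_0^T\big(f(t,X(t-\delta),u(t))-f(t,\hat X(t-\delta),\hat u(t))\big)dt$ and $I_2=\E[g(X(T))-g(\hat X(T))]$. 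By concavity of $g$ in (i), $I_2\le \E\big[\frac{\partial g}{\partial x}(\hat X(T))\tilde X(T)\big]=\E[\hat p(T)\tilde X(T)]$.

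The core step is to expand $\E[\hat p(T)\tilde X(T)]$ exactly as $\E[\frac{\partial g}{\partial x}Y(T)]$ was expanded in the proof of Theorem \ref{1}, with $Y$ replaced by $\tilde X$. We use
\begin{eqnarray*}
\tilde X(t)=\int_0^t\big(b(t,s,X(s-\delta),u(s))-b(t,s,\hat X(s-\delta),\hat u(s))\big)ds+\int_0^t\big(\sigma(t,s,\cdot)-\hat\sigma(t,s,\cdot)\big)dB(s).
\end{eqnarray*}
The argument then proceeds in four moves:
\begin{itemize}
\item Apply Itô's product formula to $\hat p\tilde X$, using the differential form \eqref{PP} of $\hat p$ and the differential of $\tilde X$. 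This differential contains the diagonal terms $\tilde b(t,t)$ and $\tilde\sigma(t,t)$, together with the $t$-derivative integrals $\int_0^t\partial_t\tilde b(t,s)ds$ and $\int_0^t\partial_t\tilde\sigma(t,s)dB(s)$.
\item Remove the local-martingale terms using (A3) and the stochastic Fubini argument already used for \eqref{Eq007}; this is where hypothesis (ii) enters.
\item Convert the term $\E\int_0^T\hat p(t)\int_0^t\partial_t\tilde\sigma(t,s)dB(s)dt$ into $\E\int\int \hat q(t,s)\partial_t\tilde\sigma(t,s)\mathbf 1_{s\le t}\,ds\,dt$ via the duality formula \eqref{dd} and the representation \eqref{e1} of Theorem \ref{TRep}.
\item Swap the order of integration and shift $s\mapsto s+\delta$ to bring every term into the delayed form indexed by $X(t)$, $t\in[0,T-\delta]$.
\end{itemize}
The outcome should be
\begin{eqnarray*}
I_1+I_2\le \E\int_0^{T}\Big(\mathcal{H}(t,X(t),u,\hat p,\hat q)-\mathcal{H}(t,\hat X(t),\hat u,\hat p,\hat q)\Big)dt-\E\int_0^T\frac{\partial\mathcal{H}}{\partial x}(t)\tilde X(t)\,dt.
\end{eqnarray*}
Here the last term comes from $\E\int\tilde X(t)\,d\hat p(t)=-\E\int \tilde X(t)h(t)dt$ with $h=\frac{\partial \mathcal H}{\partial x}$. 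In this identity $\mathcal{H}$ is evaluated with the control shifted by $\delta$, matching the definition of $\mathcal{H}$ given in the proof of Theorem \ref{1}.

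Concavity of $(x,u)\mapsto\mathcal{H}$ then gives
\begin{eqnarray*}
\mathcal{H}(u)-\mathcal{H}(\hat u)-\frac{\partial\mathcal H}{\partial x}\tilde X\le\frac{\partial\mathcal{H}}{\partial u}(t)\,(u-\hat u).
\end{eqnarray*}
Hence $J(u)-J(\hat u)\le\E\int_0^T\frac{\partial\mathcal{H}}{\partial u}(t)(u(t)-\hat u(t))dt$. Since $u-\hat u$ is $\mathcal{G}_t$-measurable, conditioning gives $\E\int_0^T\E[\frac{\partial\mathcal{H}}{\partial u}(t)\mid\mathcal{G}_t](u(t)-\hat u(t))dt$. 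Because $U$ is convex, the maximum condition (iii) implies the first-order condition $\E[\frac{\partial\mathcal H}{\partial u}(t)\mid\mathcal G_t](v-\hat u(t))\le 0$ for all $v\in U$. Therefore $J(x_0,u)\le J(x_0,\hat u)$, and $\hat u$ is optimal.

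The main obstacle is the bookkeeping in the second step, for two reasons. First, the increments $\tilde b,\tilde\sigma$ are nonlinear differences rather than derivatives. We therefore do not linearize them; the $b,\sigma$ parts of $\mathcal H$ are taken as genuine differences, and linearization happens only through the concavity inequality. Second, the delay shift must be handled carefully, including the boundary intervals $[T-\delta,T]$ and $[-\delta,0]$ where $\tilde X=0$ and the indicator $\mathbf 1_{[0,T-\delta]}$ in $\mathcal{H}$ applies. The approach will be to check term by term that each shifted integral matches a piece of $\mathcal{H}(t,X(t),\cdot)-\mathcal{H}(t,\hat X(t),\cdot)$.
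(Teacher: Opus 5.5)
Your skeleton is the paper's: split into $I_1+I_2$, use concavity of $g$, apply It\^o's product formula to $\hat p\tilde X$, remove the martingale terms with (A3) and stochastic Fubini, and handle the $\partial_t\tilde\sigma$ term with duality and $\hat q(t,s)=\E[D_s\hat p(t)\mid\mathcal F_s]$. You then finish with concavity of $\mathcal H$ and the maximum condition. Your variational-inequality ending via convexity of $U$ is actually more accurate than the paper's claim that $\E[\partial_u\hat{\mathcal H}(t)\mid\mathcal G_t]=0$.

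The gap is that you run the argument with the $\delta$-shifted Hamiltonian from the proof of Theorem \ref{1}. That object is built from $f(t+\delta,\cdot)$, $b(t+\delta,t+\delta,\cdot)$, \dots, evaluated at $(X(t),u(t+\delta))$ and cut off by $\mathbf 1_{[0,T-\delta]}$. When you substitute $s=t+\delta$ in the $ds$-integrals coming from $I_1$ and from $\E\int\hat p\,d\tilde X+\E\int d\langle\hat p,\tilde X\rangle$, they run over $t\in[-\delta,T-\delta]$. On $[-\delta,0]$ only the state increment vanishes; the control increment does not. So the contributions of control times $s\in[0,\delta]$, such as $f(s,x_0(s-\delta),u(s))-f(s,x_0(s-\delta),\hat u(s))$, $\hat p(s)\tilde b(s,s)$ and $\hat q(s,s)\tilde\sigma(s,s)$, are missing from your claimed outcome inequality.

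That inequality is false in general. Take $b=\sigma=0$, $g=0$, $f(t,x,v)=-(v-1)^2$, $U=[0,2]$; then $\hat p=\hat q=0$. For $\hat u=\mathbf 1_{[\delta,T]}$ and $u\equiv1$, the left side equals $\delta$ while your right side is $0$. Moreover, this $\hat u$ satisfies the shifted maximum condition but is not optimal.

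The conditioning step fails for the same reason. In the shifted convention $\partial_u\hat{\mathcal H}(t)$ multiplies $u(t+\delta)-\hat u(t+\delta)$. That factor is $\mathcal G_{t+\delta}$-measurable, not $\mathcal G_t$-measurable, so it cannot be pulled out of $\E[\,\cdot\mid\mathcal G_t]$.

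The repair is to use, for every $t\in[0,T]$, the unshifted Hamiltonian
\[
\mathcal H(t,x,v)=f(t,x,v)+b(t,t,x,v)\hat p(t)+\sigma(t,t,x,v)\hat q(t,t)+\int_t^T\big(\hat p(r)\partial_r b(r,t,x,v)+\hat q(r,t)\partial_r\sigma(r,t,x,v)\big)dr ,
\]
evaluated at $(X(t-\delta),u(t))$. This is how the paper's proof effectively reads $\mathcal H$ when it writes $\mathcal H(t,X(t-\delta),u(t))$ in $I_1$. Since the adjoint driver is $\frac{\partial\hat{\mathcal H}}{\partial x}(t+\delta)\mathbf 1_{[0,T-\delta]}(t)$, your It\^o computation then gives exactly
\[
I_1+\E[\hat p(T)\tilde X(T)]=\E\int_0^T\big(\mathcal H(t,X(t-\delta),u(t))-\hat{\mathcal H}(t)\big)dt-\E\int_0^{T-\delta}\tilde X(t)\,\frac{\partial\hat{\mathcal H}}{\partial x}(t+\delta)\,dt .
\]
The shift and $\tilde X=0$ on $[-\delta,0]$ are used only to rewrite the last integral as $\E\int_0^T\tilde X(t-\delta)\frac{\partial\hat{\mathcal H}}{\partial x}(t)\,dt$. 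This is the same matching the paper performs explicitly in its proof of the necessary principle. Concavity then yields $J(x_0,u)-J(x_0,\hat u)\le\E\int_0^T\frac{\partial\hat{\mathcal H}}{\partial u}(t)\,(u(t)-\hat u(t))\,dt$, where $u(t)-\hat u(t)$ is genuinely $\mathcal G_t$-measurable, and your ending applies.
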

\begin{proof}
Consider the state process $\hat{X}(t)$ associated with the admissible control $\hat{u} \in \mathcal{A}_{\mathcal{G}}$, defined by the delayed stochastic Volterra integral equation
\begin{eqnarray*}
\left\{
\begin{array}{lll}
\hat{X}(t) & = & \displaystyle \int_{0}^{t} b(t,s,\hat{X}(s-\delta),\hat{u}(s))ds+ \int_{0}^{t} \sigma\big(t,s,\hat{X}(s-\delta),\hat{u}(s)\big)dB(s), \quad t \in [0,T], \\\\
\hat{X}(t) & = & x_{0}(t), \quad t \in [-\delta,0],
\end{array}
\right.
\end{eqnarray*}
where $\delta > 0$ is a given delay.
Let $u \in\mathcal{A}_{\mathcal{G}}$ be any admissible control. Our objective is to prove that
\begin{eqnarray*}
J(x_{0},u) \leq J(x_{0},\hat{u})	,
\end{eqnarray*}
which implies that $\hat{u}$ is an optimal control for stochastic problem \eqref{CP}.
According to \eqref{cost}, we have 
\begin{eqnarray*}
J(x_0,u) - J(x_0,\hat{u})&=& \mathbb{E} \left[ \int_0^T \Big( f\big(t, X(t-\delta), u(t)\big) - f\big(t, \hat{X}(t-\delta), \hat{u}(t)\big) \Big), dt \right]\\
&& \mathbb{E} \left[ g\big(X(T)\big) - g\big(\hat{X}(T)\big) \right]\\
&=& I_1 + I_2.
\end{eqnarray*}

In view of the definition of $\mathcal{H}$ and the assumptions related to them, we have
\begin{eqnarray}\label{I1}
I_1 &=& \mathbb{E} \left[\int_0^T \left\{ \mathcal{H}(t, X(t-\delta), u(t))  - \mathcal{H}(t,\hat{X}(t-\delta), \hat{u}(t))\right.\right.\nonumber\\
&&\left.\left.- \hat{p}(t)(b(t,t, X(t-\delta),  u(t)) - b(t,t, \hat{X}(t-\delta), \hat{u}(t))\right.\right.\nonumber\\
&&\left.\left. - \hat{q}(t,t)(\sigma(t,t, X(t-\delta), u(t)) - \sigma(t,t, \hat{X}(t-\delta), \hat{u}(t)))\right.\right.\nonumber\\
&& \left.\left.- \int_{t}^{T}\hat{p}(s)\left( \frac{\partial b}{\partial s}(s,t,X(s-\delta),u(s)) - \frac{\partial b}{\partial s}(s,t,\hat{X}(s-\delta),\hat{u}(s))\right)ds\right.\right.\nonumber \\
&& \left.\left.- \int_{t}^{T}\hat{q}(s,t)\left( \frac{\partial \sigma}{\partial s}(s,t,X(s-\delta),u(s)) - \frac{\partial \sigma}{\partial s}(s,t,\hat{X}(s-\delta),\hat{u}(s))\right)ds\right\} dt \right]	\nonumber\\
&\leq & \mathbb{E}\left[\int_0^T \left\{\frac{\partial \hat{\mathcal{H}}}{\partial x}(t) (X(t-\delta) - \hat{X}(t-\delta))+\frac{\partial \hat{\mathcal{H}}}{\partial u}(t) (u(t) - \hat{u}(t))- \hat{p}(t)(b(t,t) - \hat{b}(t,t))\nonumber\right.\right. \nonumber\\
&&\left.\left.- \hat{q}(t,t)( \sigma(t,t) - \hat{\sigma}(t,t))- \int_{t}^{T}\hat{p}(s)(\frac{\partial b}{\partial s}(s,t)- \frac{\partial \hat{b}}{\partial s}(s,t)) ds \nonumber\right.\right.\nonumber \\
&&\left.\left. - \int_{t}^{T}\hat{q}(s,t)\left( \frac{\partial \sigma}{\partial s}(s,t)- \frac{\partial \hat{\sigma}}{\partial s}(s,t)\right)ds\right\}dt\right], 
\end{eqnarray}
Using the concavity of the function $g$ together with the terminal condition associated with the BSVIE, we deduce that
\begin{eqnarray*}
I_2 &\leq & \mathbb{E} \left[ \frac{\partial g}{\partial x}\big(\hat{X}(T)\big) \cdot \big(\hat{X}(T) - \hat{X}(T)\big) \right], \\
	&\leq &\mathbb{E} \left[ \hat{p}(T) \cdot \big(X(T) - \hat{X}(T)\big) \right].	
\end{eqnarray*}

Next, applying Itô's formula to $\hat{p}(T)(X(T)-\hat{X}(T))$ and taking the expectation, we have
\begin{eqnarray}\label{Eq0005}
	\E\left[ \frac{\partial g}{\partial x}(\hat{X}(T))\left(X(T)-\hat{X}(T)\right)\right]  &=&\E\left[ \hat{p}(0)\left( X(0)-\hat{X}(0)\right)+\int_0^T \hat{p}(t)\,d\left( X(t)-\hat{X}(t)\right)\right.\nonumber \\
	&&\left. +\int_0^T\left( X(t)-\hat{X}(t)\right)\,d\hat{p}(t)
	+\int_0^T\,d\langle \hat{p},\left( X-\hat{X}\right)\rangle_t\right]\nonumber\\
&=&	\E\left[\int_0^T \hat{p}(t)\,d\left( X(t)-\hat{X}(t)\right)+\int_0^T\left( X(t)-\hat{X}(t)\right)\,d\hat{p}(t)\right.\nonumber\\
	&&\left.+\int_0^T\,d\langle \hat{p},\left( X-\hat{X}\right)\rangle_t\right]\nonumber\\
	&=&J_1+J_2+J_3
\end{eqnarray}
We have
\begin{eqnarray*}
	J_1&=&\E\left[\int_0^T\hat{p}(t)\left(b(t,t)-\hat{b}(t,t)\right)dt\right]\nonumber\\ 
	&&+\E\left[\int_0^T\hat{p}(t) \left(\sigma(t,t)-\hat{\sigma}(t,t)\right)dB(t)\right]\nonumber\\
	&&+\E\left[\int_0^T\hat{p}(t)\left( \int_0^t\left( \frac{\partial b}{\partial t }(t, s)-\frac{\partial \hat{b}}{\partial t }(t, s) \right) ds\right) dt\right]\nonumber\\
	&&+ \E\left[\int_0^T \hat{p}(t)\left(  \int_0^t\left(  \frac{\partial \sigma}{\partial t }(t, s)- \frac{\partial \hat{\sigma}}{\partial t }(t, s)\right) dB(s)\right) \right]dt\nonumber\\
	&=& \E\left[\int_0^T\hat{p}(t)\left(b(t,t)-\hat{b}(t,t)\right)dt\right]+r_1+r_2.	
\end{eqnarray*}
Using Fubini's theorem, the duality formula \eqref{dd} and equality \eqref{e1} in Theorem \ref{TRep}, we obtain respectively

\begin{eqnarray}\label{Eq003}
	r_1 &=&\mathbb{E}\left[ \int_0^T \left( \int_s^T \hat{p}(t) \left(\frac{\partial
		b}{\partial t }(t, s) -\frac{\partial
		\hat{b}}{\partial t }(t, s)\right)dt\right)ds\right]\nonumber\\
	&=& \mathbb{E}\left[ \int_0^T \left( \int_t^T \hat{p}(s) \left(\frac{\partial
		b}{\partial s }(s,t)-\frac{\partial
		\hat{b}}{\partial s }(s, t)\right)ds\right)dt\right]\nonumber. 	
\end{eqnarray}
and
\begin{eqnarray*}
	r_2 &=&\int_0^T \mathbb{E}\left[ \hat{p}(t)\int_0^t  \left(\frac{\partial \sigma}{\partial t }(t, s)-\frac{\partial \hat{\sigma}}{\partial t }(t, s)\right)dB(s)\right] dt\\
	&=&\int_0^T \mathbb{E} \left[ \int_0^t \mathbb{E}\left[ D_s \hat{p}(t) \mid \mathcal{F}_s \right]\left(\frac{\partial \sigma}{\partial t}(t, s)-\frac{\partial \hat{\sigma}}{\partial t}(t, s)\right)ds \right] dt\\
	&=& \mathbb{E}\left[\int_0^T \left( \int_t^T \mathbb{E}\left[ D_t p(s) \mid \mathcal{F}_t \right]\left(\frac{\partial \sigma}{\partial s}(s, t)-\frac{\partial \hat{\sigma}}{\partial s }(s, t)\right)ds \right) dt\right]\nonumber\\
&=&	\mathbb{E}\left[\int_0^T \left( \int_t^T \hat{q}(s,t)\left( \frac{\partial \sigma}{\partial s }(s, t)-\frac{\partial \hat{\sigma}}{\partial s }(s, t)\right) \,ds \right) dt\right],
\end{eqnarray*}
which imply thay
\begin{eqnarray}\label{Eq0021}
J_1&=&\E\left[\int_0^T\left(\hat{p}(t)\left(b(t,t)-\hat{b}(t,t)\right)+\int_t^T \hat{p}(s)\left(\frac{\partial b}{\partial s }(s,t)-\frac{\partial \hat{b}}{\partial s }(s, t)\right)ds\right.\right.\nonumber\\
&&+\left.\left.\int_t^T \hat{q}(s,t)\left( \frac{\partial \sigma}{\partial s }(s, t)-\frac{\partial \hat{\sigma}}{\partial s }(s, t)\right)ds \right)dt\right].
\end{eqnarray}

On the other hand, we have
\begin{eqnarray}\label{PY1}
	J_3= \mathbb{E}\left[ \int_{0}^{T}\hat{q}(t,t)\left(\sigma(t,t)-\hat{\sigma}(t,t)\right)\,dt\right]
\end{eqnarray}
and
\begin{eqnarray}\label{Eq0071}
	J_2&=&\E\left[\int_{0}^{T}\left( X(t)-\hat{X}(t)\right) \left(-\frac{\partial \hat{\mathcal{H}}}{\partial x}(t)\mathbf{1}_{[0,T-\delta]}-\int_{t}^{T}\frac{\partial \hat{q}}{\partial t}(t,s)dB(s)\right)dt\right.\nonumber \\
	&&\left.+\int_0^T \left( X(t)-\hat{X}(t)\right) \hat{q}(t,t)\,dB(t)\right] \nonumber\\
	&=& -\E\left[\int_{0}^{T}\left( X(t)-\hat{X}(t)\right) \frac{\partial \hat{\mathcal{H}}}{\partial x}(t)\mathbf{1}_{[0,T-\delta]}dt\right]\nonumber \\
	&&-\E\left[\int_{0}^{T}\left( X(t)-\hat{X}(t)\right) \left(\int_{t}^{T}\frac{\partial \hat{q}}{\partial t}(t,s)dB(s)\right)dt\right]\nonumber\\
	&&+\E\left[\int_0^T \left( X(t)-\hat{X}(t)\right) \hat{q}(t,t)\,dB(t)\right]\nonumber\\
	&=&-\E\left[\int_{0}^{T}\left( X(t)-\hat{X}(t)\right) \frac{\partial \hat{\mathcal{H}}}{\partial x}(t)\mathbf{1}_{[0,T-\delta]}\,dt\right]
\end{eqnarray}
Indeed, in view of assumption $({\bf A1})$-$({\bf A3})$ and stochastic Fubini Theorem, one can derive easily that 
\begin{eqnarray*}
	\E\left[\int_{0}^{T}\left( X(t)-\hat{X}(t)\right) \left(\int_{t}^{T}\frac{\partial \hat{q}}{\partial t}(t,s)dB(s)\right)dt\right]&=&\E\left[\int_{0}^{T}\left(\int_{0}^{s}\left(X(t)-\hat{X}(t)\right) \frac{\partial \hat{q}}{\partial t}(t,s)dt\right)dB(s)\right]\\
	&=& 0
\end{eqnarray*}
and 
\begin{eqnarray*}
\E\left[\int_0^T \left( X(t)-\hat{X}(t)\right) \hat{q}(t,t)dB(t)\right]=0.
\end{eqnarray*}
Next, it follows from \eqref{Eq0021}, \eqref{PY1} and \eqref{Eq0071} that
\begin{eqnarray}\label{Ep008}
I_2 &\leq & \E \left[\int_0^T \left(\hat{p}(t)\,\left(b(t,t)-\hat{b}(t,t)\right)
	+ \hat{q}(t,t)\,\left( \sigma(t,t)-\hat{\sigma}(t,t)\right)+ \int_t^T \hat{p}(s) \left( \frac{\partial b}{\partial s}(s,t)-\frac{\partial \hat{b}}{\partial s}(s,t)\right)ds\right.\right.  \nonumber\\
	&&\left.\left.+ \int_t^T \hat{q}(s,t)\left( \frac{\partial \sigma}{\partial s}(s,t)- \frac{\partial \hat{\sigma}}{\partial s}(s,t)\right) \, ds -\frac{\partial \hat{\mathcal{H}}}{\partial x}(t)\mathbf{1}_{[0,T-\delta]}(t) \left(X(t)-\hat{X}(t)\right) 
	\right)dt\right].
\end{eqnarray}
Finally, in virtue of inequalities \eqref{I1} and \eqref{Ep008} we derive
\begin{eqnarray*}
J(x_0,u)-J(x_0,\hat{u}) &\leq &\mathbb{E}\left[\int_0^T\left\{\frac{\partial \hat{\mathcal{H}}}{\partial x}(t) \left(X(t) - \hat{X}(t)\right)+  \frac{\partial \hat{\mathcal{H}}}{\partial u}(t)(u(t) - \hat{u}(t))\right.\right.\\
	&&\left.\left.- \hat{p}(t)\,\left( b(t,t) - \hat{b}(t,t)\right) - \hat{q}(t,t)\,\left( \sigma(t,t) - \hat{\sigma}(t,t)\right)\right.\right.\\
	&&-\left.\left. \int_{t}^{T}\hat{p}(s)\left( \frac{\partial b}{\partial s}(s,t)- \frac{\partial \hat{b}}{\partial s}(s,t)\right) ds  - \int_{t}^{T}\hat{q}(s,t)\left( \frac{\partial \sigma}{\partial s}(s,t)- \frac{\partial \hat{\sigma}}{\partial s}(s,t)\right)ds\right.\right.\\ 
	&& \left.\left.+\hat{p}\,(t)\left(b(t,t)-\hat{b}(t,t)\right)
	+\hat{q}(t,t)\,\left( \sigma(t,t)-\hat{\sigma}(t,t)\right)\right.\right.\\
	&&+\left.\left. \int_t^T \hat{p}(s) \left( \frac{\partial b}{\partial s}(s,t)-\frac{\partial \hat{b}}{\partial s}(s,t)\right) \, ds + \int_t^T \hat{q}(s,t)\left( \frac{\partial \sigma}{\partial s}(s,t)- \frac{\partial \hat{\sigma}}{\partial s}(s,t)\right)ds\right.\right.\\
	&& \left.\left.- \frac{\partial \hat{\mathcal{H}}}{\partial x}(t)\mathbf{1}_{[0,T-\delta]}(t) \left(X(t)-\hat{X}(t)\right) 
	\right\}dt \right]\\
&\leq &\int_0^T \mathbb{E} \left[\frac{\partial \hat{\mathcal{H}}}{\partial u}(t)(u(t) - \hat{u}(t)) \right] dt.
	\end{eqnarray*}
Since $u$ and $\hat{u}$ are $(\mathcal{G}_t)_{t\geq 0}$-adapted and $\hat{u}$ maximizes the conditional Hamiltonian so that 
\begin{eqnarray*}
\mathbb{E}\left[\frac{\partial \hat{\mathcal{H}}}{\partial u}(t)|\mathcal{G}_t\right]=0,
\end{eqnarray*}
we have
\begin{eqnarray*}
J(x_{0},u) - J(x_{0},\hat{u}) &\leq &  \int_0^T \mathbb{E} \left[\E\left(\frac{\partial \hat{\mathcal{H}}}{\partial u}(t)|\mathcal{G}_t\right)(u(t)-\hat{u}(t))\right]dt \\
	&\leq& 0.
\end{eqnarray*}
This proves that $\hat{u}$ is an optimal control.
\end{proof}

\section{A necessary maximum principle}\label{sec:necessary}
One limitation of the sufficient maximum principle presented in Section 4 is the concavity assumption, which is not always satisfied in practical applications. In this section, we establish a complementary result in the reverse direction. More precisely, we show that being a directional critical point of the performance functional $J(x_{0},u)$ is equivalent to being a critical point of the conditional Hamiltonian. For this, we introduce the following assumptions
\begin{description}
\item[(A4)] For all $u \in \mathcal{A}_{\mathcal{G}}$ and all bounded $\beta \in \mathcal{A}_{\mathcal{G}}$, there exists $\varepsilon>0$ such that $u+s\beta\in\mathcal{A}_{\mathcal{G}}$ for all $s\in[-\varepsilon,\varepsilon]$.
\item[(A5)] For all  $t_0, h$ such that $t_0\leq t_0+h\leq T$  and all bounded $(\mathcal{G}_{t_0})$-measurable random variables $\alpha$, the control process $(\beta(t))_{t\geq 0}$ defined by $\beta(t)=\alpha {\bf 1}_{[t_0, t_0+h]}(t)$ belongs to $\mathcal{A}_{\mathcal{G}}$.
\item[(A6)] For every bounded $\beta\in\mathcal{A}_{\varepsilon}$, the directional derivative process 
\begin{eqnarray}\label{Ep02}
Y(t)=\frac{d}{ds}X^{u+s\beta}(t)|_{s=0},
\end{eqnarray} 
exists and belongs to $L^{2}(\lambda \times \mathbb{P})$, where $\lambda$ denotes the Lebesgue measure on $\R$.
\end{description}
We are now in a position to formulate the following result.
\begin{theorem}[Necessary Maximum Principle]
Assume $({\bf A4)}$-$({\bf A6})$ hold. Let $\hat{u} \in \mathcal{A}_{\mathcal{G}}$, and denote by $\hat{X}(t)$  the corresponding state process solving \eqref{ee1}. Let $(\hat{p}(t), \hat{q}(\cdot,t))$ be the adjoint processes associated with $\hat{u}$, solution of \eqref{p}. Then, the following statements are equivalent.
\begin{itemize}
\item[(i)] For every bounded $\beta\in \mathcal{A}_{\mathcal{G}}$, we have
\begin{eqnarray*}
\lim_{s \to 0}\frac{J(x_{0},\hat{u}+s\beta)-J(x_{0},\hat{u})}{ds} =\frac{dJ}{ds}(x_0,\hat{u}+s\beta)|_{s=0}= 0
\end{eqnarray*}
\item[(ii)] For all $t \in [0,T]$, we have
\begin{eqnarray*}
\mathbb{E} \left[ \left. \frac{\partial \mathcal{H}}{\partial u} \left(t,\hat{X}(t), \hat{u}(t), \hat{p}(t), \hat{q}(\cdot,t) \right) \, \right| \, \mathcal{G}_t \right]= 0 \quad \text{p.s.}		
\end{eqnarray*}
\end{itemize}
\end{theorem}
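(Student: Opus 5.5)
The plan is to reduce both statements to a single identity for the directional derivative of $J$, and then read off the equivalence. First, fix a bounded $\beta\in\mathcal{A}_{\mathcal{G}}$. By $({\bf A4})$ the perturbation $\hat u+s\beta$ is admissible for $|s|\le\varepsilon$, and by $({\bf A6})$ the derivative process $Y$ of \eqref{Ep02} exists in $L^2(\lambda\times\P)$. Repeating the computation of the proof of Theorem \ref{1} with $u^*$ replaced by $\hat u$ gives \eqref{CF}: $\frac{dJ}{ds}(x_0,\hat u+s\beta)|_{s=0}$ equals the expectation of the $f_x Y(t-\delta)+f_u\beta$ terms plus $g_x(\hat X(T))Y(T)$. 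Concretely, I would
\begin{itemize}
\item apply Itô's formula to $\hat p(T)Y(T)$ with $Y(0)=0$;
\item use the duality formula \eqref{dd} together with the representation \eqref{e1} of Theorem \ref{TRep} to convert the $\int\cdots dB(s)$ terms in $dY$ into $\hat q$-terms;
\item kill the stochastic integrals $\int Y\,\hat q(t,t)\,dB$ and $\int Y\int_t^T\partial_t\hat q\,dB\,dt$ by stochastic Fubini and $({\bf A3})$;
\item shift variables $t\mapsto t+\delta$.
\end{itemize}
Since $(\hat p,\hat q)$ solves \eqref{p}, whose driver is exactly $\partial\mathcal H/\partial x$, all terms involving $Y$ cancel. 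The outcome is the key identity
\begin{eqnarray*}
\frac{dJ}{ds}(x_0,\hat u+s\beta)\Big|_{s=0}=\E\left[\int_0^T \frac{\partial \hat{\mathcal H}}{\partial u}(t)\,\beta(t)\,dt\right].
\end{eqnarray*}
I take this as essentially established by the end of the proof of Theorem \ref{1}, up to the time-shift bookkeeping.

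\emph{(ii) $\Rightarrow$ (i).} Since $\beta(t)$ is $\mathcal{G}_t$-measurable and bounded, conditioning inside gives $\E[\frac{\partial\hat{\mathcal H}}{\partial u}(t)\beta(t)]=\E[\E(\frac{\partial\hat{\mathcal H}}{\partial u}(t)|\mathcal G_t)\beta(t)]=0$ for each $t$. Fubini then gives zero derivative.

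\emph{(i) $\Rightarrow$ (ii).} Fix $t_0\in[0,T)$, $h>0$ with $t_0+h\le T$, and a bounded $\mathcal G_{t_0}$-measurable $\alpha$. By $({\bf A5})$, $\beta=\alpha\mathbf 1_{[t_0,t_0+h]}$ is admissible, so the identity yields
\begin{eqnarray*}
0=\E\left[\int_{t_0}^{t_0+h}\frac{\partial\hat{\mathcal H}}{\partial u}(t)\,dt\;\alpha\right].
\end{eqnarray*}
Dividing by $h$ and letting $h\downarrow0$ gives $\E[\frac{\partial\hat{\mathcal H}}{\partial u}(t_0)\alpha]=0$ for a.e. $t_0$. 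This uses Lebesgue differentiation, or right-continuity in $t$ of $t\mapsto\E[\frac{\partial\hat{\mathcal H}}{\partial u}(t)\alpha]$, which follows from the càdlàg property of admissible controls and the continuity of the coefficients under $({\bf A1})$, with dominated convergence justified by boundedness of derivatives and square integrability of $\hat p,\hat q,\hat X$. Since $\alpha$ ranges over all bounded $\mathcal G_{t_0}$-measurable variables, we conclude $\E[\frac{\partial\hat{\mathcal H}}{\partial u}(t_0)|\mathcal G_{t_0}]=0$ a.s.

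The main obstacle is the limit $h\downarrow0$, together with making the key identity rigorous. The identity needs the interchange of $d/ds$ with expectation, via uniform integrability of difference quotients using $({\bf A6})$ and the $\mathcal C^1_b$ bounds. It also needs the integrability of $\hat q(s,t)\,\partial_s\sigma$ to justify the Fubini and duality steps. For the limit, I would first try right-continuity via càdlàg controls. If that fails, I would settle for an a.e. $t$ statement, which suffices up to a null set.
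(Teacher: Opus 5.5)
Your proposal follows the paper's proof essentially step for step. The paper likewise reruns the computation of Theorem~\ref{1} to get $\frac{dJ}{ds}(x_0,\hat u+s\beta)\big|_{s=0}=\E\bigl[\int_0^T \frac{\partial\hat{\mathcal H}}{\partial u}(t)\,\beta(t)\,dt\bigr]$, then uses $({\bf A5})$ with $\beta=\alpha\mathbf{1}_{[t,t+h]}$ and differentiates in $h$ at $h=0$, and obtains the converse by ``reversing the argument.'' Your explicit conditioning argument for (ii)$\Rightarrow$(i), and your attention to the $h\downarrow 0$ limit (right-continuity versus an a.e.-$t$ conclusion) and to the time-shift bookkeeping behind the key identity, are if anything more careful than the paper, which passes over these points silently.
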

\begin{proof}
Following the same argument of the proof of Theorem \ref{1}, we derive
\begin{eqnarray*}
\frac{dJ}{ds}(x_0,u+s\beta)|_{s=0}&=&	\E \left[ 
	\int_{0}^{T} Y(t-\delta)\frac{\partial \hat{\mathcal{H}}}{\partial x}(t)dt -\int_{0}^{T} Y(t)\frac{\partial \hat{\mathcal{H}}}{\partial x}(t+\delta)\mathbf{1}_{[0,T-\delta]}(t) \, dt + \int_{0}^{T} \frac{\partial \hat{\mathcal{H}}}{\partial u}(t) \beta(t)  \,dt 
	\right]\\
	&=& \E\left[\int_{0}^{T}\left(Y(t)\frac{\partial \hat{\mathcal{H}}}{\partial x}(t+\delta){\bf1}_{[0,T-\delta]}(t) - Y(t)\frac{\partial \hat{\mathcal{H}}}{\partial x}(t+\delta)\mathbf{1}_{[0,T-\delta]}(t) \right)dt\right.\\
	&&\left.+ \int_{0}^{T} \frac{\partial \hat{\mathcal{H}}}{\partial u}(t) \beta(t) dt\right]\\ 
	&=& \E\left[\int_{0}^{T} \frac{\partial \hat{\mathcal{H}}}{\partial u}(t) \beta(t) dt\right],
\end{eqnarray*}
where $\hat{\mathcal{H}}(t)=\mathcal{H}\left(t,\hat{X}(t), \hat{u}(t), \hat{p}(t), \hat{q}(\cdot,t) \right)$.

Next, in view of $({\bf A5})$, we get
\begin{eqnarray}\label{Ep07}
\frac{dJ}{ds}(x_0,u+s\beta)|_{s=0} 
&=& \mathbb{E}\left[ \int_0^T \frac{\partial \hat{\mathcal{H}}}{\partial u}(s)\, \beta(s)\, ds \right] \nonumber\\
&=& \mathbb{E} \left[ \alpha \int_t^{t+h} \frac{\partial \hat{\mathcal{H}}}{\partial u}(s)\, ds\right]
\end{eqnarray}
According \eqref{Ep07}, if we suppose that
\begin{eqnarray}\label{Ep08}
\frac{dJ(x_0,u+s\beta)}{ds}|_{s=0} = 0,
\end{eqnarray}
hence we have 
\begin{eqnarray*}
\mathbb{E} \left[ \alpha \int_t^{t+h} \frac{\partial \hat{\mathcal{H}}}{\partial u}(s)ds\right]=0
\end{eqnarray*}
Differentiating the the above equality at $h=0$, we get
\begin{eqnarray*}
\left.\frac{d}{dh}\mathbb{E}\left[\alpha \int_{t}^{t+h}\frac{\partial \hat{\mathcal{H}}}{\partial u}(s)ds\right]\right|_{h=0}&=&
\mathbb{E}\left[\alpha \frac{\partial \hat{\mathcal{H}}}{\partial u}(t)\right]\\
&=&0,
\end{eqnarray*}
which implies, since this holds for all bounded $\mathcal{G}_t$-measurable random variable $\alpha$, that
\begin{eqnarray*}
\mathbb{E}\left[\frac{\partial \hat{\mathcal{H}}}{\partial u}(t)|\mathcal{G}_t \right] = 0.
\end{eqnarray*}
Conversely, if we assume that $(ii)$ holds, then we obtain $(i)$ by reversing the argument we used to obtain \eqref{Ep07}.
\end{proof}

\section{Optimal consumption of a delayed Volterra type cash flow}
Let $X^u(t)$ denote a given cash flow, modeled by the following stochastic Volterra equation with delay:
\begin{eqnarray}
	X^{u}(t) &=& x_0 + \int_0^t \big[ b_0(t,s) X^{u}(s-\delta) - u(s) \big] \, ds + \int_0^t \sigma_0(s) X^{u}(s-\delta) \, dB(s) 
	, \quad t \ge 0.
\end{eqnarray}
Equivalently, the dynamics can be written in differential form as
\begin{eqnarray}
\left\{
\begin{array}{l}
dX^{u}(t) = \big[ b_0(t,t) X^{u}(t-\delta) - u(t) \big]dt+ \sigma_0(t) X^{u}(t-\delta) \, dB(t) + \left(\displaystyle \int_0^t \frac{\partial b_0}{\partial t}(t,s) X^{u}(s-\delta) \, ds \right) dt, \quad t \ge 0,	\\\\
X(0) = x_0.
\end{array}
\right.
\end{eqnarray}
We note that the dynamics of $X^{u}(t)$ involve a memory effect, captured by the integral term with respect to $ds$. We assume that $b_0(t,s)$ and $\sigma_0(s)$ are given deterministic functions with values in $\R$, and that $b_0(t,s)$ is continuously differentiable with respect to $t$ for each fixed  $s$. For simplicity, we further assume that these functions are bounded.

Our objective is to solve the following maximization problem which consist to find $\hat{u} \in \mathcal{A}_{\mathbb{G}}$ such that
\begin{eqnarray}\label{OP}
	J(x_{0},\hat{u})= \sup_{u\in \mathcal{G}_t} J(x_{0},u),
\end{eqnarray}
where the performance functional $J$ is defined by
\begin{equation}\label{j}
J(x_{0},u) = \mathbb{E}\left[\theta\, X(T)+\int_0^T \log(u(t))dt\right],
\end{equation}
where $\theta$ is a given $\mathcal{F}_T$-measurable random variable.

In light of the results established in Sections 3, 4, and 5, we obtain the following result, which provides an explicit characterization	 the optimal control $\hat{u}$ solving problem \eqref{OP}. We first give the following statement. Let us define, for $0 \leq t\leq  T-\delta$ 
\begin{eqnarray*}
\varphi_1(t) = \int_{t+\delta}^{T} b_0(t+\delta,s_1)ds_1,\;\;\;\;\;\
\varphi_2(t) = \int_{t+\delta}^T\int_{s_1+\delta}^T b_0(t+\delta,s_1)b_0(s_1+\delta,s_2) ds_1ds_2,
\end{eqnarray*}
and recursively
\begin{eqnarray*}
\varphi_{n-1}(t) = \int_{t+\delta}^T\int_{s_1+\delta}^T\cdots\int_{s_{n-1}+\delta}^T \prod_{k=0}^{n-1} b_0(s_k+\delta,s_{k+1})ds_1d\cdots s_n,\;\;\;\;\;\; n\geq 3.
\end{eqnarray*}
We have 
\begin{remark}
Since we assume the function $b_0$ bounded, there exists a constant $C>0$ such that $|b_0(t+\delta,s)|\leq C$. Moreover, by induction method we prove that
for all $n \geq 1$,
\begin{eqnarray*}
|\varphi_n(t)| \leq \frac{C^n T^n}{n!}.
\end{eqnarray*}
Hence, for all $t$
\begin{eqnarray*}
\sum_{n=1}^{+\infty}|\varphi_n(t)|<+\infty.
\end{eqnarray*}
In the sequel let us set
\begin{eqnarray}\label{Res}
\Psi(t)=\sum_{n=1}^{+\infty}\varphi_n(t).
\end{eqnarray}
\end{remark}
	
\begin{theorem}
Let $\hat{u}$ be the optimal solution to the consumption problem \eqref{OP}. Then, we derive
\begin{eqnarray*}
\hat{u}(t)&=& \left(\mathbb{E}\left[ \mathbb{E}^{\mathbb{Q}}\left(\Psi(t)\theta\middle|\mathcal{F}_t\right)|\mathcal{G}_t\right]\right)^{-1}\\
&=& \left(\mathbb{E}\left[\frac{\mathbb{E}\left(\Psi(t)M(T)\theta\middle|\mathcal{F}_t\right)}{M(t)}\middle|\mathcal{G}_t\right]\right)^{-1},
\end{eqnarray*}
where $\mathbb{Q}$ is a probability measure defined by
\begin{eqnarray}\label{Gir}
	 d\mathbb{Q}=M(T)d\P
\end{eqnarray}
 with
\begin{eqnarray*}
	M(t)=\exp\left(\int_0^t\sigma_0(s)dB(s)-\frac{1}{2}\int_0^t\sigma_0^2(s)ds\right).
\end{eqnarray*}
\end{theorem}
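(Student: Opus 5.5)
Specialize the machinery of Sections 3--5 to this example: identify $b(t,s,x,u)=b_0(t,s)x-u$, $\sigma(t,s,x,u)=\sigma_0(s)x$, $f(t,x,u)=\log u$ and $g(x)=\theta x$. Then write the Hamiltonian of Section 3,
\begin{eqnarray*}
\mathcal{H}(t,x,u,p,q)=\Big[\log u+\big(b_0(t+\delta,t+\delta)x-u\big)p(t+\delta)+\sigma_0(t+\delta)x\,q(t+\delta,t+\delta)+x\int_{t+\delta}^T p(s)\frac{\partial b_0}{\partial s}(s,t+\delta)ds\Big]{\bf 1}_{[0,T-\delta]}(t).
\end{eqnarray*}
It is concave in $(x,u)$, since it is affine in $x$ and $\log$ is concave in $u$, and $g$ is linear. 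So once the maximum condition is verified, the sufficient maximum principle of Section 4 yields optimality, and by the necessary principle of Section 5 any optimal $\hat u$ must satisfy $\mathbb{E}[\partial_u\hat{\mathcal H}(t)\mid\mathcal G_t]=0$. Computing this derivative gives $1/\hat u(t)=\mathbb{E}[\hat p(t+\delta)\mid\mathcal G_t]$, up to the index shift built into $\mathcal H$, which should be tracked so that the final expression is indexed by $t$. The problem therefore reduces to computing $\mathbb{E}[\hat p(\cdot)\mid\mathcal G_t]$ explicitly.

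Next, write the adjoint ABSVIE \eqref{p} for this example. The driver is linear: $\mu$ involves $p(t+\delta)$ with coefficient $b_0(t+\delta,t+\delta)$ together with the integral $\int_{t+\delta}^T p(s)\partial_s b_0(s,t+\delta)ds$, and $q(t,t+\delta)$ enters with coefficient $\sigma_0(t+\delta)$. Integrating by parts in $s$ should combine the $b_0$-terms into $\int_{t+\delta}^T b_0(t+\delta,s_1)\,(\cdot)\,ds_1$-type kernels, matching the form of $\varphi_1$. I would then remove the $q$-term by a Girsanov change of measure: under $\mathbb{Q}$ defined by \eqref{Gir}, $B^{\mathbb Q}(t)=B(t)-\int_0^t\sigma_0(s)ds$ is a Brownian motion. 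This lets me rewrite $\sigma_0 q\,ds-q\,dB$ as $-q\,dB^{\mathbb Q}$, so that taking $\mathbb{E}^{\mathbb Q}[\cdot\mid\mathcal F_t]$ kills the stochastic integral. The result is a linear deterministic-kernel Volterra equation with advanced argument,
\begin{eqnarray*}
\hat p(t)=\mathbb{E}^{\mathbb Q}\Big[\theta+\int_{t+\delta}^T b_0(t+\delta,s_1)\hat p(s_1+\delta)\,ds_1\,\Big|\,\mathcal F_t\Big],
\end{eqnarray*}
or its analogue, with the convention $\hat p=0$ beyond $T$ coming from the indicator ${\bf 1}_{[0,T-\delta]}$.

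I would solve this equation by Picard iteration: substitute it into itself repeatedly, using the tower property of $\mathbb{E}^{\mathbb Q}[\cdot\mid\mathcal F_\cdot]$ and Fubini's theorem. The $n$-th iterate produces exactly the iterated integrals defining $\varphi_n(t)$, multiplied by $\theta$. The remainder term is bounded by $C^nT^n/n!$ times $\|\theta\|$, using the Remark, so it tends to zero. The Neumann series then converges, and $\hat p$ is expressed through $\Psi(t)\theta$ as in \eqref{Res}; the $n=0$ term, i.e.\ $\theta$ itself, should be absorbed into the convention for $\Psi$. Finally, Bayes' formula $\mathbb{E}^{\mathbb Q}[Z\mid\mathcal F_t]=\mathbb{E}[ZM(T)\mid\mathcal F_t]/M(t)$ gives the second expression, and substituting into $1/\hat u=\mathbb{E}[\hat p\mid\mathcal G_t]$ gives the stated formula.

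The main obstacle is the bookkeeping in the adjoint equation. This means converting the $\partial_s b_0$ integral and the $\delta$-shifts into precisely the kernels $b_0(s_k+\delta,s_{k+1})$ on the domains $s_{k+1}\ge s_k+\delta$. It also means checking that the Girsanov step is legitimate for the Volterra term $q(t,s)$, whose first argument is frozen; for this I would use Theorem \ref{TRep} to identify $q(t,s)$ with $\mathbb{E}[D_s\hat p(t)\mid\mathcal F_s]$, so that $t\mapsto\hat p(t)$ is a conditional expectation that can be handled pointwise in $t$.
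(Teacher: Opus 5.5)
Your route is the paper's own: Hamiltonian, first-order condition $\hat u(t)=1/\mathbb{E}[\hat p(t)\mid\mathcal{G}_t]$, Girsanov with density $M(T)$, $\mathbb{Q}$-conditional expectation, Neumann series. You use Picard iteration in place of the paper's ansatz $\hat p=\varphi F$; that step is fine, and so is your observation that \eqref{Res} is missing the $n=0$ term. The trouble is upstream, in the two steps you treat as bookkeeping. The paper's proof skips these too: its equation \eqref{ABSVIE} is written down, not derived from \eqref{p}.

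\textbf{The Girsanov step.} It fails because of the delay, not because $q(t,s)$ has a frozen first argument. In the driver of \eqref{p}, the $q$-term sits at the shifted time: it is $\sigma_0(r+\delta)\,q(\cdot,r+\delta)$, and only for $r\le T-\delta$. After integrating over $[t,T]$ it lives on the window $[t+\delta,T]$. By contrast, $dB=dB^{\mathbb{Q}}+\sigma_0\,ds$ produces $\int_t^T\sigma_0(s)q(t,s)\,ds$ on $[t,T]$. The piece over $[t,(t+\delta)\wedge T]$ does not cancel and survives $\mathbb{E}^{\mathbb{Q}}[\,\cdot\mid\mathcal{F}_t]$; Theorem~\ref{TRep} does not change this.

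In fact the stated formula is false. Take $b_0\equiv0$, $\sigma_0\equiv\sigma>0$, $\theta=e^{B(T)}$, $\mathcal{G}=\mathcal{F}$ and $\delta\le T\le2\delta$. Then $X^u(v)=X^0(v)-\int_0^v u$ on $[0,\delta]$. Using \eqref{dd}, one gets exactly $J(u)=C+\mathbb{E}\int_0^T\bigl(\log u(t)-p(t)u(t)\bigr)dt$ with $p(t)=e^{B(t)+(T-t)/2}\bigl(1+\sigma(T-t-\delta)^+\bigr)$, so $\hat u=1/p$. Your formula, with your convention $\Psi\equiv1$ on $[0,T-\delta]$, instead gives $\hat u(t)=e^{-B(t)-(\sigma+1/2)(T-t)}$ there.

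\textbf{The integration by parts.} It does not produce the kernels of $\varphi_n$ either. We have
$b_0(r+\delta,r+\delta)p(r+\delta)+\int_{r+\delta}^T p(s)\,\partial_s b_0(s,r+\delta)\,ds=\theta\,b_0(T,r+\delta)-\int_{r+\delta}^T b_0(s,r+\delta)\,dp(s)$.
So the kernel that appears is $b_0(s,r+\delta)$, with the \emph{later} time in the first slot. For $\sigma_0\equiv0$ and $\theta=1$, the first-order term of $\hat p(t)$ is $\int_{t+\delta}^T b_0(T,s)\,ds$, not $\varphi_1(t)=\int_{t+\delta}^T b_0(t+\delta,s)\,ds$. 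Concretely, for $b_0(t,s)=g(t)\ge0$ and $\delta\le T\le2\delta$, a direct computation gives $\hat p(t)=1+g(T)(T-t-\delta)$ on $[0,T-\delta]$. Your series gives $1+g(t+\delta)(T-t-\delta)$.

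\textbf{The index shift.} This must be resolved in the variation itself, not in $\mathcal{H}$. The control $u$ enters $b$ and $f$ without delay, so the $\beta$-terms give $\mathbb{E}[1/\hat u(t)-\hat p(t)\mid\mathcal{G}_t]=0$ on all of $[0,T]$. Your $\mathcal{H}$ carries $\mathbf{1}_{[0,T-\delta]}(t)$ and so yields no condition on $(T-\delta,T]$. On that interval, in fact, $\hat p(t)=\mathbb{E}[\theta\mid\mathcal{F}_t]$, a $\mathbb{P}$-conditional expectation, not a $\mathbb{Q}$-conditional one.

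What the method can establish is $\hat u(t)=1/\mathbb{E}[\hat p(t)\mid\mathcal{G}_t]$, with $\hat p$ solving the correctly specialized linear anticipated equation. The closed form $\Psi(t)\,\mathbb{E}^{\mathbb{Q}}[\theta\mid\mathcal{F}_t]$ does not hold in general.
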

If, in addition, $\mathcal{F}=\mathcal{G}$, we have
\begin{eqnarray*}
	\hat{u}(t)=\frac{M(t)}{\Psi(t)\mathbb{E}\left(M(T)\theta\middle|\mathcal{F}_t\right)}.
\end{eqnarray*}
\begin{proof}
In view of Section 3, the Hamiltonian functional associated to our control problem \eqref{OP} is defined by
\begin{eqnarray}\label{eq:hamiltonian}
\mathcal{H}(t, x, u, \hat{p}(t), \hat{q}) &=& \log(u(t+\delta)) + b_0(t+\delta,t+\delta) x \hat{p}(t+\delta) - u(t+\delta) \hat{p}(t+\delta) + \sigma_0(t+\delta) x \hat{q}(t+\delta,t+\delta)\nonumber\\
&&+ \int_{t+\delta}^T \frac{\partial b_0}{\partial s}(s,t+\delta) x \hat{p}(s)ds.
\end{eqnarray}
Assume that there exists an optimal control $\hat{u} \in \mathcal{A}_{\mathbb{G}}$ associated with the performance functional \eqref{j}, with corresponding state and adjoint processes $(\hat{X}, \hat{p}, \hat{q})$. Then, by the maximum principle, the first-order optimality condition yields, for each $t\in [0,T-\delta]$
\begin{eqnarray*}
	\mathbb{E}\Bigg[\frac{\partial H}{\partial u}\big(t, \hat{X}(t), u, \hat{p}(t), \hat{q}(t, s)\big) \,\big|\, \mathcal{G}_t\Bigg]_{u=\hat{u}(t)} = 0,
\end{eqnarray*}
Consequently,
\begin{eqnarray*}
	\mathbb{E}\left[\frac{1}{\hat{u}(t)} - \hat{p}(t)|\mathcal{G}_t\right] = 0.
\end{eqnarray*}
Since $\hat{u}$ is $\mathbb{G}$-adapted, it follows that
\begin{eqnarray}\label{eq:optimal_u}
	\hat{u}(t) = \frac{1}{\mathbb{E}[\hat{p}(t) \,|\, \mathcal{G}_t]}. 
\end{eqnarray}
For the optimal control $\hat{u}$, the corresponding adjoint process satisfies the following linear advanced backward stochastic Volterra integral equation (ABSVIE): for $0 \leq t \leq T-\delta$.
\begin{eqnarray}\label{ABSVIE}
\hat{p}(t) = \theta + \int_{t+\delta}^{T} \left[b_0(t+\delta,s)\hat{p}(s+\delta)+ \sigma_0(s)\hat{q}(t,s+\delta)\right]ds-\int_t^T \hat{q}(t,s)dB(s). 
\end{eqnarray}
To solve this ABSVIE, we follow the approach of Theorem 3.1 in Hu and Øksendal \cite{HO}. In view of Girsanov theorem, let consider $\widetilde{B}$ a $\mathbb{Q}$-Brownian motion defined by 
\begin{eqnarray*}
	\widetilde{B}(t)= B(t) -\int_0^t \sigma_0(s)ds \quad t \in [0,T].
\end{eqnarray*}
Therefore equation \eqref{ABSVIE} under $\Q$ becomes 
\begin{eqnarray*}
\hat{p}(t) = \theta + \int_{t+\delta}^T b_0(t+\delta,s)\hat{p}(s+\delta)ds -\int_t^T \hat{q}(t,s)d\widetilde{B}(s),
\end{eqnarray*}
which implies, taking the $\Q$-expectation with respect to $\mathcal{F}_t$, that
\begin{eqnarray}\label{Cal}
\hat{p}(t)&=&\E^{\Q}\left[\theta + \int_{t+\delta}^T b_0(t+\delta,s)\hat{p}(s+\delta)ds\middle|\mathcal{F}_t\right]\nonumber\\
&=& F(t)+\int_{t+\delta}^T b_0(t+\delta,s)\E^{\Q}\left[\hat{p}(s+\delta)|\mathcal{F}_t\right]ds,
\end{eqnarray}
where
\begin{eqnarray*}
\tilde{F}(t)=\mathbb{E}_{\mathbb{Q}}[\theta \mid \mathcal{F}_t].
\end{eqnarray*}
Since $b_0$ is deterministic, we look for a solution of the form 
\begin{eqnarray*}
\hat{p}(t)=\varphi(t)F(t),
\end{eqnarray*}
where $\varphi$ is a deterministic function.
Plugging into \eqref{Cal}, we obtain
\begin{eqnarray*}
\varphi(t)F(t)=F(t)+\int_{t+\delta}^T b_0(t+\delta,s)\varphi(t+\delta)\E^{\Q}\left[F(t+\delta)|\mathcal{F}_t\right]ds.
\end{eqnarray*}
Using the tower property
\begin{eqnarray*}
	\E^{\Q}\left[F(s+\delta)|\mathcal{F}_t\right]=F(t), 
\end{eqnarray*}
which yields
\begin{eqnarray*}
\varphi(t)F(t)&=&F(t)+\int_{t+\delta}^T b_0(t+\delta,s)\varphi(s+\delta)F(t)ds\\
.&=& F(t)\left [1+\int_{t+\delta}^T b_0(t+\delta,s)\varphi(s+\delta)ds\right).
\end{eqnarray*}
Therefore, $\varphi$ satisfies the deterministic advanced Volterra equation
\begin{eqnarray*}
	\varphi(t)=1+\int_{t+\delta}^T b_0(t+\delta,s)\varphi(s+\delta)ds.
\end{eqnarray*}
The solution can be written as a Neumann series (resolvent expansion) such that
$\varphi(t)=\Psi(t)$, where $\Psi$ is defined by \eqref{Res}.
Then we get
\begin{eqnarray*}
\hat{p}(t)&=&	\Psi(t)F(t)\\
&=&\Psi(t)\E^{\Q}(\theta|\mathcal{F}_t)\\
&=&\Psi(t)\frac{\E(M(T)\theta|\mathcal{F}_t)}{M(t)}.
\end{eqnarray*}
Finally according to \eqref{eq:optimal_u} we obtain
\begin{eqnarray*}
\hat{u}(t)=\left(\E\left[\Psi(t)\frac{\E(M(T)\theta|\mathcal{F}_t)}{M(t)}|\mathcal{G}_t\right]\right)^{-1}	.
\end{eqnarray*}
In the case $\mathcal{F}=\mathcal{G}$, this reduces to
\begin{eqnarray*}
\hat{u}(t)=\frac{M(t)}{\Psi(t)\E(M(T)\theta|\mathcal{F}_t)}.
\end{eqnarray*}
\end{proof}
\begin{remark}
Suppose that $b_{0}(t,s)=b,\; \sigma_0(t)=\sigma,\; \theta\in L^2(\mathcal{F}_T$ and $\mathcal{F}=\mathcal{G}$. In this case, since
\begin{eqnarray*}
\hat{p}(t)=\Psi(t)\E^{\Q}(\theta|\mathcal{F}_t),	
\end{eqnarray*}
where $\Psi$ is solution of the advanced ordinary differential
\begin{eqnarray*}
	\Psi(t)=1+b\int^T_{t+\delta}\Psi(s+\delta)ds,\;\; \Psi(t)=1, \; t\geq T,
\end{eqnarray*}
we derive
\begin{eqnarray*}
\Psi(t)=\exp(b(T-t-\delta)^{+}),
\end{eqnarray*}
where $x^{+}=\max(x,0)$.
Next, we obtain 
\begin{eqnarray*}
\hat{p}(t)&=&\exp(b(T-t-\delta)^{+})\E^{\Q}(\theta|\mathcal{F}_t)\\
&=& \exp(b(T-t-\delta)^{+})\frac{\E(M(T)\theta|\mathcal{F}_t)}{M(t)},
\end{eqnarray*}
which provides finally
\begin{eqnarray*}
\hat{u}(t)&=& \frac{M(t)}{\exp(b(T-t-\delta)^{+})\E(M(T)\theta|\mathcal{F}_t)}.
\end{eqnarray*}
\end{remark}

\end{document}